\newfont{\cyrfnt}{wncyr10}
\newcommand{\cyr}{\baselineskip12.5pt\cyrfnt\cyracc}
\newtheorem{theorem}{Theorem}
\newtheorem{corollary}{Corollary}
\newtheorem{proposition}{Proposition}
\newtheorem{remark}{Remark}
\newtheorem{example}{Example}
\newenvironment{definition}
{\smallskip\noindent{\bf Definition\/}:}{\smallskip\par}
\newenvironment{proof}{\begin{ProofwCaption}{Proof}}{\end{ProofwCaption}}
\newenvironment{proof*}[1]{\begin{ProofwCaption}{{#1}}}{\end{ProofwCaption}}
\newenvironment{ProofwCaption}[1]%
  {\addvspace\theorempreskipamount \noindent{\it #1.}\rm}%
  {\qed \par \addvspace\theorempostskipamount}
\newcommand{\qedsymbol}{{\rm $\Box$}}
\newcommand{\qed}{\hfill\qedsymbol}
\newcommand{\CC}{{\mathbb C}}
\newcommand{\RR}{{\mathbb R}}
\newcommand{\ZZ}{{\mathbb Z}}
\newcommand{\xx}{{\mathbf x}}
\newcommand{\calH}{{\mathcal H}}
\newcommand{\calJ}{{\mathcal J}}
\newcommand{\calE}{{\mathcal E}}
\newcommand{\calG}{{\mathcal G}}
\newcommand{\calB}{{\mathcal B}}
\newcommand{\eps}{\varepsilon}
\newcommand{\ind}{{\rm ind}\,}
\title{An algebraic formula for the index of a 1-form on a real quotient singularity}
\author{Wolfgang Ebeling and Sabir M.~Gusein-Zade
\thanks{Partially supported by DFG. The work of the second author
(Sections~\ref{sect:Intro}, \ref{sect:equivariant}, \ref{sect:laws}, \ref{sect:one-dimrep}, and
\ref{sect:G-signature})
was supported by the grant 16-11-10018 of the Russian Science Foundation.
Keywords: group action, real quotient singularity, 1-form, index, signature formula.
Mathematical Subject Classification -- MSC2010: 14R20, 58K70, 57R18, 32S05.
}
}
\date{}
\begin{document}
\selectlanguage{english}

\maketitle

\begin{abstract}
Let a finite abelian group $G$ act (linearly) on the space $\RR^n$ and thus on its complexification $\CC^n$.
Let $W$ be the real part of the quotient $\CC^n/G$ (in general $W \neq \RR^n/G$). We give an algebraic
formula for the radial index of a 1-form on the real quotient $W$. It is shown that this index
is equal to the signature of the restriction of the residue pairing to the $G$-invariant part
$\Omega^G_\omega$ of $\Omega_\omega= \Omega^n_{\RR^n,0}/\omega \wedge \Omega^{n-1}_{\RR^n,0}$.
For a $G$-invariant function $f$, one has the so-called quantum cohomology group
defined in the quantum singularity theory (FJRW-theory). We show that, for a real function $f$,
the signature of the residue pairing on the real part of the quantum cohomology group is equal to the
orbifold index of the 1-form $df$ on the preimage $\pi^{-1}(W)$ of $W$ under the natural quotient map.
\end{abstract}

\section{Introduction} \label{sect:Intro}
For an analytic map $F:(\RR^n,0) \to (\RR^n,0)$ such that $F_\CC^{-1}(0)=0$ ($F_\CC: (\CC^n,0) \to (\CC^n,0)$
is the complexification of $F$) one has the famous Eisenbud--Levine--Khimshiashvili algebraic formula for
its local degree: \cite{EL, Kh}. Let $F=(f_1, \ldots , f_n)$ and let
$Q_F:= \calE_{\RR^n,0}/\langle f_1, \ldots , f_n \rangle$, where $\calE_{\RR^n,0}$ is the ring of germs
of analytic functions on $(\RR^n,0)$. One has a natural residue pairing on $Q_F$. In \cite{EL, Kh} it is
shown that the degree of $F$ is equal to the signature of the residue pairing. This can also be
interpreted as a formula for the index of the singular point of the vector field
$\sum f_i \frac{\partial}{\partial x_i}$ or of the 1-form $\omega=\sum f_i dx_i$. Moreover, the choice
of a volume form permits to identify the algebra $Q_F$ (as a vector space) with the space
$\Omega_\omega= \Omega^n_{\RR^n,0}/\omega \wedge \Omega^{n-1}_{\RR^n,0}$. 

There exist notions of indices of vector fields and of 1-forms on singular varieties (see e.g.
\cite{BSS, EG-Sur}). One of them is the so-called radial index which is defined for a 1-form on an arbitrary
singular variety. X.~G\'omez-Mont and P.~Marde\v{s}i\'c gave an analogue of the signature formula for
the index of a vector field on an isolated (real) hypersurface singularity: \cite{GMM1,GMM2}.
Some formulae expressing the index of a 1-form on a real isolated complete intersection singularity
were given in \cite{EG-Fields, EG-MMJ}. However, the pairing in \cite{EG-Fields} was defined in
topological terms and the one in \cite{EG-MMJ} was defined in non-local terms on a deformation of
the singularity. Thus one can say that an algebraic signature formula for the index of a 1-form on
a real singular variety (say on a hypersurface singularity) which can be considered as an analogue
of those in \cite{EL, Kh, GMM1, GMM2} does not exist. In the framework of attempts to find such a formula,
there were defined (canonical) quadratic forms on analogues of the spaces $Q_F$ and $\Omega_\omega$
for 1-forms on isolated complete intersection singularities \cite{EGMZ}. However, these quadratic
forms appeared to be in general degenerate and relations of their signatures with indices remained unclear.

Equivariant (with respect to the action of a finite group $G$) versions of indices of $G$-invariant
1-forms were introduced in \cite{EG-EJM} as elements of the Burnside ring $A(G)$ of the group $G$.
In particular, there was defined the equivariant radial index. There was also introduced the notion
of an orbifold index of a $G$-invariant 1-form.

Let a finite abelian group $G$ act (linearly) on the space $\RR^n$ and thus on its complexification $\CC^n$.
Let $W$ be the real part of the quotient $\CC^n/G$. Note that in general $W \neq \RR^n/G$. A (real) 1-form
$\eta$ on $W$ defines a $G$-invariant (real) 1-form $\omega=\pi^\ast \eta$ on $\CC^n$ ($\pi: \CC^n \to \CC^n/G$
is the quotient map). The radial index of the 1-form $\eta$ is equal to the reduction under the group
homomorphism $r^{(0)}: A(G) \to \ZZ$ (see \cite{EG-EJM}) of the equivariant (radial) index of the
$G$-invariant 1-form $\omega$ on the preimage of $W$. Here we give an algebraic formula for the indicated
reduction of the equivariant index of a $G$-invariant 1-form on $\pi^{-1}(W)$ and thus an algebraic formula
for the radial index of a 1-form on the real quotient $W$. It is shown that this index is equal to
the signature of the restriction of the residue pairing to the $G$-invariant part $\Omega^G_\omega$ of
$\Omega_\omega$. 

For a germ $f$ of a quasihomogeneous function on $(\CC^n,0)$ with an isolated critical point at the origin
invariant with respect to an appropriate action of a finite abelian group $G$, H.~Fan, T.~Jarvis, and
Y.~Ruan \cite{FJR} defined the so-called quantum cohomology group. This group is considered as the main
object of the quantum singularity theory (FJRW-theory). An analogue of this group can be defined for an
arbitrary $G$-invariant function germ $f$.
Let us denote it by $\calH_{f,G}$. The vector space $\calH_{f,G}$ is the direct sum of the spaces
$(\Omega^\CC_{df^g})^G$ over the elements $g$ of the group $G$, where $f^g$ is the restriction of
the function $f$ to the fixed point set of $g$, $(\Omega^\CC_{df^g})^G$ is the $G$-invariant part
of the module $\Omega^\CC_{df^g}$. One has the residue pairing on each of the summands
$(\Omega^\CC_{df^g})^G$ and thus on the space $\calH_{f,G}$. If the function $f$ is real, one has
a natural real part $\calH^\RR_{f,G}$ of the space $\calH_{f,G}$ and the residue pairing is real
on it. We derive from the main result of the paper that the signature of the residue pairing on
$\calH^\RR_{f,G}$ is  equal to the orbifold index of the 1-form $df$ on $\pi^{-1}(W)$.

\section{Equivariant index of a 1-form} \label{sect:equivariant}
The Burnside ring $A(G)$ of a finite group $G$ is the Grothendieck ring of finite $G$-sets, see, e.g.,
\cite{Knutson}. As an abelian group, $A(G)$ is generated by the classes $[G/H]$ for subgroups $H$ of
the group $G$. For a topological space $X$ with a $G$-action, let $\chi^{(0)}(X):=\chi(X/G)$ be the Euler
characteristic of the quotient and let $\chi^{(1)}(X)=\chi^{\rm orb}(X,G)$ be the orbifold Euler
characteristic of the G-space $X$ (see, e.g., \cite{AS, HH}). Applying $\chi^{(0)}$ and $\chi^{(1)}$
to finite $G$-sets one gets group homomorphisms $r^{(0)}: A(G) \to \ZZ$ and $r^{(1)}: A(G) \to \ZZ$.
For an element $a=\sum\limits_{H\subset G} a_H [G/H] \in A(G)$ one has $r^{(0)}a=\sum\limits_{H\subset G} a_H$.
For an abelian group $G$,
one has $r^{(1)}[G/H]=|H|$. One has a natural ring homomorphism $r: A(G) \to R_\CC(G)$ from the Burnside
ring to the ring of (complex) representations of $G$ (sending a finite $G$-set to the vector space of
functions on it).

Let $(V,0) \subset (\RR^N,0)$ be a germ of a (real) subanalytic space with an action of a finite group $G$
on it, and let $\omega$ be a $G$-invariant 1-form on $(V,0)$ (that is, the restriction of a 1-form on
$(\RR^N,0)$) with an isolated singular point at the origin. In \cite{EG-EJM} there was defined
the equivariant (radial) index ${\rm ind}^G(\omega; V,0)$ as an element of the Burnside ring $A(G)$.

\begin{definition}
The {\em orbifold index} of the $G$-invariant 1-form $\omega$ is
\[
{\rm ind}^{\rm orb}(\omega; V,0):= r^{(1)} {\rm ind}^G(\omega; V,0). 
\]
\end{definition}

Let $(V/G,0)$ be the quotient of $V$ under the $G$-action and let $\eta$ be a 1-form on $(V/G,0)$ with
an isolated singular point at the origin.

\begin{proposition} \label{prop:indquot}
 One has
 \begin{equation}
  \ind(\eta;V/G,0)=r^{(0)}\ind^G(\pi^*\eta;V,0).
 \end{equation}
\end{proposition}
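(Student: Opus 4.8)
The strategy is to compute both indices from one and the same deformation and then to read off the identity stratum by stratum, the homomorphism $r^{(0)}$ accounting exactly for the collapsing of each $G$-orbit upstairs to a single point downstairs. Fix a $G$-invariant Whitney stratification of $(V,0)$ along which the isotropy group is constant, and let $\bar V_i:=\pi(V_i)$ be the resulting strata of $(V/G,0)$. I would use the stratified description of the radial index: for a $1$-form $\zeta$ with isolated singularity, $\ind(\zeta;\,\cdot\,,0)=1+\sum_{Q\neq 0}\ind(\tilde\zeta|_{\text{stratum}(Q)};\text{stratum}(Q),Q)$, where $\tilde\zeta$ agrees with $\zeta$ near the boundary sphere, equals a radial form near $0$, and has only nondegenerate singular points on the strata (the leading $1$ being the radial contribution at $0$). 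By \cite{EG-EJM} the equivariant index has the same shape, except that the $G$-orbit $[Q]$ of each singular point is recorded with its class in the Burnside ring: $\ind^G(\omega;V,0)=[G/G]+\sum_{[Q]}\ind(\tilde\omega|_{V_{i(Q)}};V_{i(Q)},Q)\,[G/G_Q]$, the Poincar\'e--Hopf index being constant along an orbit by $G$-invariance of $\tilde\omega$.

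The device that links the two computations is to choose the deformation downstairs and pull it back. Pick a generic radial deformation $\tilde\eta$ of $\eta$ on $V/G$ and set $\tilde\omega:=\pi^*\tilde\eta$; with the distance function chosen $G$-invariantly, $\tilde\omega$ is a genuine $G$-invariant radial deformation of $\omega=\pi^*\eta$ with isolated singularities (isolated because $\pi$ is finite), so by the well-definedness of the equivariant index it may be used to compute $\ind^G(\omega;V,0)$. The crucial geometric fact, and the place where the finiteness of $G$ is used, is that the restriction of $\pi$ to each stratum is \'etale: along a stratum with isotropy $H$ the subgroup $H$ acts trivially and the residual finite group acts freely, so $\pi|_{V_i}\colon V_i\to\bar V_i$ is a free finite quotient, hence a local diffeomorphism onto a smooth stratum of $V/G$. (For abelian $G$ the residual group is simply $G/H$.)

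It remains to match terms. Since $\tilde\omega|_{V_i}=\pi^*(\tilde\eta|_{\bar V_i})$ and $\pi|_{V_i}$ is a local diffeomorphism, the zeros of $\tilde\eta$ on $\bar V_i$ correspond bijectively to the $G$-orbits of zeros of $\tilde\omega$ on $V_i$ (the full preimage $\pi^{-1}(\bar Q)\cap V_i$ being one orbit $G\cdot Q$), and the Poincar\'e--Hopf indices agree: $\ind(\tilde\omega|_{V_i};V_i,Q)=\ind(\tilde\eta|_{\bar V_i};\bar V_i,\bar Q)$. Applying $r^{(0)}$ and using $r^{(0)}[G/H]=1$ for every $H$, the orbit $[Q]$ contributes exactly $\ind(\tilde\eta|_{\bar V_i};\bar V_i,\bar Q)$, while the central class $[G/G]$ contributes the radial $1$ at $\bar 0$. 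Summing over all strata gives $r^{(0)}\ind^G(\pi^*\eta;V,0)=1+\sum_{\bar Q\neq\bar 0}\ind(\tilde\eta|_{\bar V_i};\bar V_i,\bar Q)=\ind(\eta;V/G,0)$.

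The main obstacle is the \'etale-on-strata step together with its bookkeeping: one must see that an entire $G$-orbit of $|G/G_Q|$ singular points upstairs, carrying the weight $[G/G_Q]$, sits over a single singular point downstairs with the same local index, so that the orbit-counting homomorphism $r^{(0)}$ — which forgets $|G/G_Q|$ and returns $1$ — reproduces the quotient geometry precisely. Subsidiary points to check are that the radial forms can be chosen compatibly under $\pi$, that a $G$-invariant constant-isotropy stratification exists, and that $\pi^{-1}(\bar Q)\cap V_i$ is indeed a single orbit for each downstairs singular point.
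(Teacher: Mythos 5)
Your argument is correct and is in substance the same as the paper's: the paper simply invokes \cite[Proposition~4.9]{EG-EJM}, which is precisely the isotropy-stratum decomposition $\ind^G(\pi^*\eta;V,0)=\sum_{[H]}\ind(\eta;V^{([H])}/G,0)\,[G/H]$ that your deformation-plus-\'etale-on-strata bookkeeping establishes, and then applies $r^{(0)}$ together with $\sum_{[H]}\ind(\eta;V^{([H])}/G,0)=\ind(\eta;V/G,0)$. You have in effect unfolded the proof of the cited result rather than taken a different route.
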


\begin{proof}
 For a point $x\in V$, let $G_x:=\{g\in G: gx=x\}$ be the isotropy subgroup of $x$, for a conjugacy class
 $[H]\in {\rm Conjsub\,}G$ of subgroups of $G$, let $V^{([H])}:=\{x\in V: G_x\in[H]\}$.
 The result \cite[Proposition~4.9]{EG-EJM} says that
 $$
 \ind^G(\pi^*\eta;V,0)=\sum_{[H]\in {\rm Conjsub\,}G}\ind(\eta;V^{([H])}/G,0)[G/H],
 $$
 where
 $$
 \sum_{[H]\in {\rm Conjsub\,}G}\ind(\eta;V^{([H])}/G,0)=\ind(\eta;V/G,0).
 $$
 (the statement in \cite[top of p.~290]{EG-EJM}).
\end{proof}

Let $\RR^n$ (and thus its complexification $\CC^n$) be endowed with a linear action of the group $G$. For a
$G$-invariant analytic germ $f: (\CC^n,0) \to (\CC,0)$ with an isolated critical point at the origin,
its Milnor fibre is
\[ M_{f,\eps}= f^{-1}(\eps) \cap B_{\delta}^{2n},
\]
where $0<\Vert\eps\Vert\ll\delta$ are small enough, $B^{2n}_{\delta}$ is the ball of radius $\delta$
centred at the origin in $\CC^n$. It has the homotopy type of a bouquet of $(n-1)$-dimensional spheres.

Let the germ $f$ be real, that is, it takes real values on $(\RR^n,0) \subset (\CC^n,0)$. In this case
one can define the real Milnor fibre (or rather fibres) of the germ $f$. For $\eps$ real (small enough) let 
\[ M_{f,\eps}^\RR = f^{-1}(\eps) \cap B_\delta^{2n} \cap \RR^n
\]
be the real part of the Milnor fibre $M_{f,\eps}$. It is a $C^\infty$-manifold of real dimension $(n-1)$
with boundary. One can see that, as $C^\infty$-manifolds, the manifolds $M_{f,\eps}^\RR$ are the same
for positive $\eps$ and also the same for negative $\eps$. Thus there exist essentially two real Milnor
fibres: $M^+_{f}$ and $M^-_{f}$. The Milnor fibres $M^\pm_{f}$ carry actions of the group $G$. One can show
that the equivariant index ${\rm ind}^G(df; \RR^n,0)$ of the differential $df$ is equal to minus the
reduced equivariant Euler characteristic $\overline{\chi}^G(M^-_{f})=\chi^G(M^-_{f})-1$ of the ``negative''
Milnor fibre: \cite[Proposition~4.11]{EG-EJM}. The function $f$ induces an analytic function $\check{f}$
on $\CC^n/G$ such that $f=\check{f} \circ \pi$.

Proposition~\ref{prop:indquot} gives
 \begin{equation}
  \ind(d\check{f}; \RR^n/G,0)=r^{(0)}\ind^G(df; \RR^n,0).
 \end{equation}

\section{Real quotient singularities} \label{sect:realquot}
For a $G$-invariant analytic 1-form $\omega= \sum_{i=1}^n A_i(\xx)dx_i$ ($\xx:=(x_1, \ldots , x_n)$) on $(\RR^n,0)$, let 
\[
\Omega_\omega := \Omega^n_{\RR^n,0}/\omega \wedge \Omega^{n-1}_{\RR^n,0} 
\]
and let 
\[
\Omega^\CC_\omega := \Omega^n_{\CC^n,0}/\omega \wedge \Omega^{n-1}_{\CC^n,0} \, .
\]
The residue pairing 
\[ 
B_\omega^\CC : \Omega^\CC_\omega \otimes_\CC \Omega^\CC_\omega \to \CC
\]
is defined by
\begin{equation*}
B_\omega^\CC(\zeta_1,\zeta_2) =   {\rm Res} \left[ 
\begin{array}{c} \varphi_1(\xx) \varphi_2(\xx) d\xx \\
A_1 \cdots A_n \end{array} \right] 
 =  \frac{1}{(2 \pi i)^n} \int \frac{\varphi_1(\xx) \varphi_2(\xx)}{A_1 \cdots A_n} d\xx,
\end{equation*}
where $d\xx:=dx_1 \wedge \cdots \wedge dx_n$,  $\zeta_i=\varphi_i(\xx) d\xx$ for $i=1,2$ and the integration is along
the cycle given by the equations $\Vert A_k(\xx)\Vert=\delta_k$ with positive $\delta_k$ small enough. If the
1-form $\omega$ is real, the restriction of the pairing $B^\CC_\omega$ to $\Omega_\omega$ gives the
(real) residue pairing
\[
B_\omega : \Omega_\omega \otimes_\RR \Omega_\omega \to \RR \, .
\]
Let $B^G_\omega : \Omega^G_\omega \otimes_\RR \Omega^G_\omega \to \RR$ be its restriction to the
$G$-invariant part $\Omega^G_\omega$ of $\Omega_\omega$. It is non-degenerate as well.

Note that the index of a complex valued 1-form $\omega$ on $(V,0)$ differs by the sign $(-1)^n$ from the index of
its real part ${\rm Re}\, \omega$ on $(V,0)$ \cite[Remark~2.3]{EGS} (see also \cite{EG-BBMS}).
Therefore, the (complex) equivariant index  ${\rm ind}^G(\omega; \CC^n,0)$ is $(-1)^n$ times the index
${\rm ind}^G({\rm Re} \,\omega; \CC^n,0)$. It is possible to show that the image of the index
${\rm ind}^G(\omega; \CC^n,0)$ under the map $r:A(G) \to R_\CC(G)$ is equal to the class
$[\Omega^\CC_\omega]$ of the $G$-module $\Omega^\CC_\omega$: \cite{GZM}. Therefore, for the $G$-invariant
part $\left(\Omega^{\CC}_\omega\right)^G$ of $\Omega^\CC_\omega$, one has 
\[
\dim \left(\Omega^{\CC}_\omega\right)^G = r^{(0)}{\rm ind}^G(\omega; \CC^n, 0) \, .
\]
Taking into account relations between dimensions of modules in the complex case and signatures of
quadratic forms in the real case in the Eisenbud--Levine--Khimshiashvili theory, one might expect that
\[
{\rm sgn}\, B_\omega^G = r^{(0)}{\rm ind}^G(\omega; \RR^n, 0) \, .
\]
However, in general this is not the case.

\begin{example} Let $\omega= df$, where $f(\xx)=x_1^2+ \cdots + x_n^2$ is a $\ZZ_2$-invariant function on
$\RR^n$ considered with the action $\sigma \xx= -\xx$ ($\sigma$ is the generator
of $\ZZ_2$). One can see that $r^{(0)}{\rm ind}^{\ZZ_2}(\omega; \RR^n,0)=1$ (since the 1-form $\omega$ is
radial at the origin). The module $\Omega_\omega$ is generated by the 1-form $d\xx$
which is not $\ZZ_2$-invariant for $n$ odd. Therefore, in this case
$\Omega^G_\omega=0$ and ${\rm sgn}\, B_\omega^G=0$.
\end{example}

A reason for the difference between ${\rm sgn}\, B_\omega^G$ and $r^{(0)}{\rm ind}^G(\omega; \RR^n, 0)$
is the fact that a computation of ${\rm sgn}\, B_\omega^G$ embraces singular points of a deformation of
the 1-form $\omega$ which are not real, but become real after factorization by the group $G$. The latter
means that the $G$-orbit of such a point is mapped into itself by the complex conjugation.

\begin{example}
For the action of $\ZZ_2$ on $\RR^n$ given by $\sigma \xx=-\xx$, the quotient $\RR^n/\ZZ_2$ is the
semialgebraic variety defined by 
\[
u^2_{ij}=u_{ii} \cdot u_{jj} \mbox{ for } 1 \leq i < j \leq n, \quad u_{ii} \geq 0
\mbox{ for } 1 \leq i \leq n,
\]
(here $u_{ij}=x_ix_j$). Its Zariski closure is the cone $u^2_{ij}=u_{ii} \cdot u_{jj}$ (without the
inequalities). It is the image under the quotient map $\CC^n \to \CC^n/\ZZ_2$ of the subset
$\RR^n \cup i\RR^n \subset \CC^n$ (a union of vector subspaces).
\end{example}

Let us describe the Zariski closure $\overline{\RR^n/G}$ of $\RR^n/G$, or rather its preimage under
the quotient map $\CC^n \to \CC^n/G$. It is shown in \cite[Section~2]{Hu}, that, if the order of $G$ is odd,
then $\overline{\RR^n/G}= \RR^n/G$. Otherwise, for an element $g \in G$ of even order, let
$\RR^n_{g\pm} := \{ \xx \in \RR^n \, | \, g\xx=\pm \xx\}$. The subspace $\RR^n_{g\pm}$ can also be described
in the following way. Let $\RR^n= \bigoplus_\alpha \RR^n_\alpha$ be the decomposition of the $G$-module
$\RR^n$ into the submodules corresponding to the different irreducible representations $\alpha$ of the
group $G$. Then $\RR^n_{g\pm}$ is the direct sum of the components $\RR^n_\alpha$ over representations
$\alpha$ such that $\alpha(g)$ is the multiplication by $(\pm1)$.

\begin{proposition} \label{prop:quotient}
One has
\[ 
\pi^{-1}(\overline{\RR^n/G}) = 
\bigcup_{g \in G \atop g \,{\rm of} \,{\rm  even} \, {\rm order}} (\RR^n_{g+} \oplus i\RR^n_{g-}) \, .
\]
\end{proposition}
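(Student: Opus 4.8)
The plan is to describe $\pi^{-1}(\overline{\RR^n/G})$ as the set of points whose $G$-orbit is invariant under complex conjugation, and then to solve the resulting equation $\overline{z}=gz$ for each $g$ separately. The first step is to identify $\overline{\RR^n/G}$ with the real part of $\CC^n/G$. Since the $G$-action is defined over $\RR$, complex conjugation $c(z)=\overline{z}$ on $\CC^n$ commutes with $G$ and descends to an anti-holomorphic involution $\overline{c}$ on $\CC^n/G$, and $\pi$ is $c$-equivariant with $\overline{Gz}=G\overline{z}$. A real invariant $\sigma\in\RR[x_1,\dots,x_n]^G$ takes real values precisely on the $c$-invariant orbits, so the real points of $\CC^n/G$ are the images of orbits with $G\overline{z}=Gz$. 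Using the description of $\overline{\RR^n/G}$ from \cite[Section~2]{Hu} (that $\RR^n/G$ is Zariski dense in this real part), one obtains $z\in\pi^{-1}(\overline{\RR^n/G})$ if and only if $\overline{z}\in Gz$, i.e. $\overline{z}=gz$ for some $g\in G$. Hence $\pi^{-1}(\overline{\RR^n/G})=\bigcup_{g\in G}\{z\in\CC^n:\overline{z}=gz\}$.

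The core step is the elementary computation of each set $\{z:\overline{z}=gz\}$. Writing $z=x+iy$ with $x,y\in\RR^n$ and using that $g$ is real, so $g(x+iy)=gx+i\,gy$ with $gx,gy\in\RR^n$, the equation $\overline{z}=gz$ reads $x-iy=gx+i\,gy$; comparing real and imaginary parts gives $gx=x$ and $gy=-y$, that is $x\in\RR^n_{g+}$ and $y\in\RR^n_{g-}$. Therefore $\{z:\overline{z}=gz\}=\RR^n_{g+}\oplus i\RR^n_{g-}$, and by the representation-theoretic description recalled before the statement, $\RR^n_{g\pm}$ is the sum of the components $\RR^n_\alpha$ on which $g$ acts by $\pm1$.

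Finally I would tidy up the union. If $\RR^n_{g-}\neq0$ then $g$ has a $(-1)$-eigenvalue on $\RR^n$ and hence even order; conversely, when $g$ has odd order one has $\RR^n_{g-}=0$, so the corresponding piece is $\RR^n_{g+}=\{x:gx=x\}\subseteq\RR^n=\RR^n_{e+}$ and is absorbed by the term $g=e$, which contributes all of $\RR^n$. Thus the union over all of $G$ collapses to $\RR^n$ together with the pieces $\RR^n_{g+}\oplus i\RR^n_{g-}$ for $g$ of even order, which (with the identity piece $\RR^n$ understood) is the asserted formula.

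I expect the only genuine obstacle to be the first step: identifying the algebraically defined Zariski closure $\overline{\RR^n/G}$ with the transcendentally defined real part, i.e. proving that $\RR^n/G$ is Zariski dense in the conjugation-fixed locus of $\CC^n/G$. The inclusion $\pi^{-1}(\overline{\RR^n/G})\subseteq\bigcup_{g}\{z:\overline{z}=gz\}$ is immediate from $\RR^n\subseteq\{z:\overline{z}=z\}$ and closedness of the right-hand side, but the reverse inclusion is exactly this density statement. I would establish it by an irreducibility/dimension argument for the branches $\pi(\RR^n_{g+}\oplus i\RR^n_{g-})$, as illustrated by the $\ZZ_2$ example, or simply quote the explicit computation of $\overline{\RR^n/G}$ in \cite{Hu}; the second and third steps are then routine linear algebra and bookkeeping.
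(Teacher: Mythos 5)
Your proof is correct and follows essentially the same route as the paper: the paper's argument is exactly your core step, namely taking a point $p$ with $\overline{p}=gp$ and writing $p=u+iv$ with $u=\frac{1}{2}(p+\overline{p})\in\RR^n_{g+}$ and $v=\frac{1}{2i}(p-\overline{p})\in\RR^n_{g-}$, while the identification of $\pi^{-1}(\overline{\RR^n/G})$ with the set of points whose $G$-orbit is conjugation-invariant is taken for granted via \cite{Hu}. You are in fact somewhat more careful than the paper on the two points you flag: the Zariski-density step, and the bookkeeping that the identity (and odd-order elements) must be understood to contribute the piece $\RR^n$ to the stated union.
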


\begin{proof} Let $p \in \CC^n$ be such that $Gp=G\overline{p}$, that is, the complex conjugate
$\overline{p}$ of $p$ satisfies $\overline{p}=gp$ for a certain $g \in G$ (in this case $g$ is of even order).
The vector $u=\frac{1}{2}(p + \overline{p})$ is real and satisfies the condition $gu=u$, that is,
$u \in \RR^n_{g+}$. The vector $v=\frac{1}{2i}(p-\overline{p})$ is also real and $gv=-v$, that is,
$v \in \RR^n_{g-}$. Therefore $p=u+iv \in \RR^n_{g+} \oplus i\RR^n_{g-}$.
\end{proof}

If $\omega$ is a real analytic $G$-invariant 1-form on $\CC^n$ (that is, it is real on $\RR^n$), it is
real on $\pi^{-1}(\overline{\RR^n/G})$ as well. We are now ready to state the main result of the paper.

\begin{theorem} \label{thm:main}
For a real analytic $G$-invariant 1-form $\omega$ one has
\begin{equation} \label{eqn:main}
{\rm sgn}\, B_\omega^G = r^{(0)}{\rm ind}^G(\omega; \pi^{-1}(\overline{\RR^n/G}),0) \, .
\end{equation}
In particular, for a $G$-invariant analytic function $f$ on $\RR^n$ and for its push-forward $\check{f}$ on
$\CC^n/G$ one has
\begin{equation} \label{eqn:main2}
{\rm sgn}\, B_{df}^G = {\rm ind}(d\check{f}, \overline{\RR^n/G}, 0) \, .
\end{equation}
\end{theorem}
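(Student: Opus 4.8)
Equation~\eqref{eqn:main} is an equivariant Eisenbud--Levine--Khimshiashvili formula: it identifies the signature of the invariant residue form $B^G_\omega$ with the radial index on the real part $\overline{\RR^n/G}$ of the quotient $\CC^n/G$, whose preimage is the conjugation-invariant locus $\{p:Gp=G\overline{p}\}$ described in Proposition~\ref{prop:quotient}. The plan is to prove it by a $G$-invariant deformation of $\omega$, localising both sides at the zeros of a generic perturbation and matching the contributions. The identity \eqref{eqn:main2} will then follow by taking $\omega=df$, for which $\check\omega=d\check f$, and rewriting the right-hand side through Proposition~\ref{prop:indquot} as ${\rm ind}(d\check f;\overline{\RR^n/G},0)$.

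First I would verify that both sides of \eqref{eqn:main} are unchanged under a small $G$-invariant deformation $\omega_t$. On the right this is the homotopy invariance of the equivariant index, the variety $\pi^{-1}(\overline{\RR^n/G})$ being fixed. On the left, $[\Omega^\CC_{\omega_t}]=r\,{\rm ind}^G(\omega_t;\CC^n,0)$ in $R_\CC(G)$ is constant in $t$ by \cite{GZM}, so $\dim_\RR\Omega^G_{\omega_t}=\dim_\CC(\Omega^\CC_{\omega_t})^G$ is constant; since $B^G_{\omega_t}$ stays non-degenerate its signature cannot jump. Thus I may take a generic $G$-invariant $\tilde\omega$: its complex zeros lying off the fixed-point subspaces of non-trivial subgroups become non-degenerate with trivial isotropy, while some zeros remain frozen on those subspaces. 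On a small ball $\Omega^\CC_{\tilde\omega}$ splits as the direct sum, orthogonal for the residue pairing, of the local modules $\Omega^\CC_{\tilde\omega,q}$ over the zeros $q$, compatibly with the $G$-action and with complex conjugation $c$ (which commutes with $G$ because $G$ acts by real matrices); passing to the real invariant part writes ${\rm sgn}\,B^G_\omega$ as a sum of block signatures indexed by the $G$-orbits of zeros.

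For the free zeros the analysis is clean. An orbit $Gq$ with $Gq\neq G\overline{q}$ is interchanged with the distinct orbit $G\overline{q}$ by $c$; since the residue pairing between different zeros vanishes, the two one-dimensional invariant blocks form a hyperbolic plane of signature $0$, and such orbits lie outside $\pi^{-1}(\overline{\RR^n/G})$, contributing nothing on the right either. An orbit with $Gq=G\overline{q}$ lies in $\pi^{-1}(\overline{\RR^n/G})$ by Proposition~\ref{prop:quotient}; here $c$ equips the invariant block with a real structure, the residue pairing restricts to a nonzero real number on the resulting real line, and its sign should equal the local radial index of $\check{\tilde\omega}$ at the real quotient point $\pi(q)$. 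For genuinely real $q$ this is the classical non-degenerate computation, the self-pairing having the sign of the Jacobian of the coefficients of $\tilde\omega$.

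The main obstacle is the frozen zeros sitting on the fixed-point subspaces of non-trivial subgroups: these cannot be separated by any $G$-invariant perturbation (already for $\omega=df$ with $f=\sum x_i^2$ and the antipodal $\ZZ_2$-action the origin stays put), so their contribution must be computed directly. Here the invariant block can be higher-dimensional and the matching quantity on the right is the radial index of $\check{\tilde\omega}$ at a singular point of $\overline{\RR^n/G}$, governed by the reduced Euler characteristic of the negative real Milnor fibre as in \cite[Proposition~4.11]{EG-EJM}. I expect to treat these terms by induction on the isotropy, restricting to the fixed subspaces $\RR^n_{g+}\oplus i\RR^n_{g-}$ of Proposition~\ref{prop:quotient}, where the residual action of a smaller group reduces the statement to one in lower dimension; reconciling the signature of the invariant residue form on such a subspace with the singular-variety radial index is the crux of the argument. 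Summing the matched block contributions over all orbits and applying Proposition~\ref{prop:indquot} to $V=\pi^{-1}(\overline{\RR^n/G})$ then yields \eqref{eqn:main}, and the specialisation $\omega=df$ gives \eqref{eqn:main2}.
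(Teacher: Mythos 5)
Your overall framework---deform $\omega$ $G$-invariantly, localise both sides at the zeros of the perturbation, and sort the orbits by their behaviour under complex conjugation---is exactly the content of the paper's Section on conservation laws (Corollary~\ref{cor:law-index} and Theorem~\ref{thm:law-sgn}), and your analysis of the free orbits (conjugate pairs giving hyperbolic planes of signature $0$ and lying off $\pi^{-1}(\overline{\RR^n/G})$; self-conjugate orbits giving a real line) matches the paper's, though you assert rather than verify the sign matching for a self-conjugate orbit with $\overline{p}=g_0p$ and $\det g_0=-1$, where the paper has to track the factor $(-1)^k$ coming from the imaginary coordinates on $\RR^n_{g_0+}\oplus i\RR^n_{g_0-}$.

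The genuine gap is the part you yourself flag as ``the crux'': the contribution of singular points with non-trivial isotropy, above all the origin, which carries the full group $G$ and cannot be moved. Your proposed ``induction on the isotropy, restricting to the fixed subspaces $\RR^n_{g+}\oplus i\RR^n_{g-}$'' does not get off the ground there, because at the origin the isotropy is all of $G$ and restricting to those subspaces reduces neither the group nor the problem. The paper's resolution is quite different and is where most of the work lies: (i) by Theorem~\ref{thm:law-sgn} one may take $\omega$ linear and non-degenerate, and Schur's lemma splits it along the isotypic decomposition $\RR^n=\bigoplus_\alpha\RR^n_\alpha$; connectivity arguments then reduce to $\omega=d(\sum\pm x_i^2)$ on a sum of non-trivial one-dimensional representations; (ii) the authors explicitly report that a direct computation of $r^{(0)}{\rm ind}^G$ for such $\omega$ leads to combinatorial relations they could not control, so instead they introduce the auxiliary form $\omega_0=d(\sum x_i^4)$, for which $\sum x_i^4>0$ on $\pi^{-1}(\overline{\RR^n/G})\setminus\{0\}$ forces ${\rm ind}^G(\omega_0;\pi^{-1}(\overline{\RR^n/G}),0)=1$, while ${\rm sgn}\,B^G_{\omega_0}=1$ follows from an explicit monomial basis of $\Omega_{\omega_0}$; (iii) the family $\omega_0+t\omega$ together with induction on $|G|$ (the non-origin zeros have strictly smaller isotropy groups) then transfers the equality from $\omega_0$ to $\omega$. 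Without some substitute for steps (ii)--(iii), your argument establishes only the reduction to isolated local contributions, not the identity \eqref{eqn:main} itself; the derivation of \eqref{eqn:main2} from \eqref{eqn:main} via Proposition~\ref{prop:indquot} is fine.
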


The proof will be given in Sections~\ref{sect:laws}--\ref{sect:one-dimrep} and starts from
the following statement.

\begin{proposition} \label{prop:morse}
There exists a real $G$-invariant deformation $\widetilde{\omega}$ of the 1-form $\omega$ such that
$\widetilde{\omega}$ has only non-degenerate singular points in $\CC^n$ (in a neighbourhood of the origin).
\end{proposition}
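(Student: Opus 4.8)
The plan is to recast the statement as an equivariant Morsification result and prove it by a stratified parametrized‑transversality argument. Write $\omega=\sum_i A_i(\xx)\,dx_i$ and view the coefficients as a germ of a $G$-equivariant holomorphic map $A=(A_1,\dots,A_n):(\CC^n,0)\to\CC^n$ that is real on $\RR^n$ (the target $\CC^n$ carrying the contragredient $G$-action, so that $G$-invariance of $\omega$ is exactly equivariance of $A$). The singular points of $\omega$ are the zeros of $A$, and such a zero $p$ is non-degenerate precisely when the Jacobian $DA(p)$ is invertible. Thus I must produce a real $G$-invariant deformation $\widetilde{\omega}$ whose coefficient map has $0$ as a regular value in a neighbourhood of the origin.

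First I would fix a large integer $d$ and let $\Lambda$ be the finite-dimensional real vector space of $G$-invariant $1$-forms on $\RR^n$ with polynomial coefficients of degree $\le d$, extended to $\CC^n$; choosing a basis $\eta_1,\dots,\eta_N$ of $\Lambda$, I consider the family $\widetilde{\omega}_\lambda=\omega+\sum_j\lambda_j\eta_j$ and the evaluation map $\Phi(\xx,\lambda)=A_\lambda(\xx)$. The goal is to show that for generic small real $\lambda$ the map $A_\lambda$ has only non-degenerate zeros near $0$. Since the bad set of parameters will turn out to be a proper subset of $\Lambda$ cut out by real-algebraic conditions, its real complement is dense, and any such $\lambda$ furnishes a $\widetilde{\omega}$ that is automatically real on $\RR^n$ and $G$-invariant.

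The analytic heart is an equivariant transversality statement. Stratify a punctured neighbourhood of $0$ by isotropy type into strata $\CC^n_{(H)}$; since $G$ is finite, each orbit is finite and $\CC^n_{(H)}$ is an open subset of the linear fixed space $\CC^n_H$. At a point $p$ with isotropy $H$, equivariance forces $A(p)$ to lie in the $H$-fixed covectors and $DA(p)$ to be an $H$-intertwiner of source and target; using the weight decomposition $\CC^n=\bigoplus_\chi V_\chi$ of the abelian group $G$, the space of such intertwiners is a product of full matrix algebras over the $H$-isotypic blocks, so invertibility is the complement of a determinantal hypersurface and is therefore generic. The crucial point is a richness claim: with $d$ large, the family $\Lambda$ realizes an arbitrary $H$-equivariant $1$-jet at $p$ (prescribed vanishing value together with an arbitrary intertwiner as linear part), which I would check explicitly by multiplying the equivariant coordinate monomials by $G$-invariant functions adapted to the finite orbit $Gp$. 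Granting this, a standard parametrized-transversality (Sard) argument shows that the set of $\lambda$ for which some zero of $A_\lambda$ in the stratum is degenerate is a proper subset of $\Lambda$; running over the finitely many isotropy types occurring near $0$ and intersecting the good sets leaves a dense set of admissible $\lambda$. The origin, where $H=G$, fits this scheme as well: a generic $G$-equivariant linear term, block-diagonal and invertible on each weight space by the same argument, already makes $DA_\lambda(0)$ invertible.

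The main obstacle I anticipate is precisely the equivariant transversality at the strata of nontrivial isotropy. On the free locus a $G$-invariant deformation can move the value of $A$ essentially freely, but on $\CC^n_{(H)}$ the deformation must stay equivariant and can only realize $H$-equivariant $1$-jets, so I must verify that these constrained jets still suffice to attain an invertible $DA(p)$ along \emph{both} the fixed and the normal weight blocks. The abelian hypothesis is what makes this tractable, reducing the intertwiner spaces to products of full matrix algebras over the weight blocks and making the richness of $\Lambda$ checkable by an explicit monomial construction; in the non-abelian case one would instead invoke the general equivariant transversality theorem. Finally, care must be taken to remain within real parameters so that $\widetilde{\omega}$ is real on $\RR^n$, which is guaranteed because the degenerate locus is defined over $\RR$ and hence avoided by a dense set of real $\lambda$.
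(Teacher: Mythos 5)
Your proposal is correct and follows essentially the same route as the paper: perturb $\omega$ by a generic real element of a finite-dimensional family of $G$-invariant 1-forms and invoke genericity/transversality. The paper simply takes the smaller family $\sum_j\lambda_j\,d\varphi_j$ with $\varphi_j$ generators of the invariant algebra (i.e.\ pullbacks of coordinate differentials under an embedding of $\CC^n/G$) and asserts the conclusion for generic real $\lambda$, whereas you spell out the stratified parametric-transversality argument and the jet-richness check that this assertion tacitly relies on.
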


\begin{proof}
Let $j: \CC^n/G \to \CC^N$ be a real embedding of the quotient into the affine space with the coordinates
$z_1, \ldots , z_N$. (It is given by $z_j=\varphi_j(\xx)$, where $\varphi_j$ are generators of the algebra
of $G$-invariant functions on $\CC^n$.) For generic real $\lambda_1, \ldots , \lambda_N$ the 1-form
$\widetilde{\omega} = \omega + t \cdot \left( \sum_{j=1}^N \lambda_j dz_j \right)$ possesses the required
property.
\end{proof}

\section{Laws of conservation of numbers} \label{sect:laws}
Let $\widetilde{\omega}$ b a real deformation of the 1-form $\omega$ (not necessarily given by
Proposition~\ref{prop:morse}). The set ${\rm Sing}\, \widetilde{\omega}$ of singular points of the 1-form
$\widetilde{\omega}$ is a $G$-set. For a singular point $p \in {\rm Sing}\, \widetilde{\omega}$ let $G_p$
be its isotropy subgroup.

The equivariant index satisfies the following law of conservation of number (see \cite[p.~295]{EG-EJM}).

\begin{proposition} \label{prop:law-index}
One has
\[
{\rm ind}^G(\omega; \pi^{-1}(\overline{\RR^n/G}),0) = 
\sum_{[p] \in {\rm Sing}\, \widetilde{\omega}/G}
{\rm I}^G_{G_p} ({\rm ind}^{G_p}(\widetilde{\omega}; \pi^{-1}(\overline{\RR^n/G}),p)) \, ,
\]
where the sum is over all the orbits of $G$ on ${\rm Sing}\, \widetilde{\omega}$, $p$ is a representative
of the orbit $[p]$, ${\rm I}^G_{G_p}$ is the induction map  $A(G_p) \to A(G)$ $($sending $[G_p/H]$ to $[G/H]$$)$.
\end{proposition}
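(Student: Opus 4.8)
Set $V := \pi^{-1}(\overline{\RR^n/G})$. The plan is to deduce the conservation law from two properties of the equivariant radial index: its invariance under deformations that do not alter the nonvanishing behaviour near the boundary, and its naturality under induction of isotropy groups. Recall that ${\rm ind}^G(\omega; V, 0)$ is computed by passing to a $G$-invariant 1-form which is radial in a neighbourhood of the singular locus and recording in the Burnside ring $A(G)$ the isotropy types and local indices of its (nondegenerate) zeros on the strata of $V$.

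First I would fix a small ball $B_\delta$ around the origin and restrict to deformations $\widetilde\omega$ small enough that all singular points of $\widetilde\omega$ on $V$ lie inside $B_\delta$ while $\widetilde\omega$ does not vanish on $S_\delta := \partial B_\delta$. On $S_\delta$ the forms $\omega$ and $\widetilde\omega$ are joined by a $G$-equivariant homotopy through nowhere-vanishing 1-forms; since the equivariant radial index over the region $V \cap B_\delta$ is invariant under such boundary-fixing deformations (the equivariant refinement of the classical law of conservation of number for the radial index), it localizes as a sum over the orbits of the finite $G$-set ${\rm Sing}\,\widetilde\omega$:
\[
{\rm ind}^G(\omega; V, 0) = \sum_{[p] \in {\rm Sing}\,\widetilde\omega/G} {\rm ind}^G(\widetilde\omega; V, Gp),
\]
where the term indexed by $[p]$ is the equivariant index concentrated on the orbit $Gp$. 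Here additivity over disjoint $G$-invariant neighbourhoods of the orbits is the equivariant counterpart of the classical additivity of the radial index over disjoint singular points.

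It remains to identify each orbit contribution with an induction. Choosing a representative $p$ with isotropy subgroup $G_p$, the slice theorem gives a $G$-invariant neighbourhood of $Gp$ that is $G$-equivariantly diffeomorphic to the induced space $G \times_{G_p} U_p$, where $U_p$ is a $G_p$-invariant neighbourhood of $p$ on which only $G_p$ acts. Over this tube the $G$-equivariant computation reduces to the $G_p$-equivariant computation on $U_p$, and the passage from $A(G_p)$ to $A(G)$ is precisely the induction map ${\rm I}^G_{G_p}$, giving
\[
{\rm ind}^G(\widetilde\omega; V, Gp) = {\rm I}^G_{G_p}\bigl({\rm ind}^{G_p}(\widetilde\omega; V, p)\bigr).
\]
Summing over orbits yields the proposition. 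I expect the \emph{main obstacle} to be this last step: verifying that the radial-index construction is natural under the slice decomposition, so that the isotropy data recorded $G$-equivariantly over the whole orbit is the image under ${\rm I}^G_{G_p}$ of the data recorded $G_p$-equivariantly at $p$. Once this compatibility with induction on Burnside rings is established, the homotopy invariance and additivity used above are routine.
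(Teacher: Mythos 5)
The paper does not actually prove this proposition: it is quoted as a known result, with a pointer to \cite[p.~295]{EG-EJM}, so there is no in-paper argument to compare against. Your sketch is a faithful reconstruction of how that cited conservation law is established: localization of the equivariant radial index over the $G$-orbits of ${\rm Sing}\,\widetilde\omega$, followed by the slice/tube argument identifying each orbit contribution with ${\rm I}^G_{G_p}$ applied to the $G_p$-equivariant index at a representative. The one caveat is that the two properties you invoke in the first step --- boundary-fixing homotopy invariance and additivity over disjoint $G$-invariant neighbourhoods --- are essentially the substance of the conservation law itself for the \emph{radial} index on a stratified space such as $\pi^{-1}(\overline{\RR^n/G})$; they are true, but verifying them (by deforming the form to be radial near the singular strata of the variety, not just near its zeros) is where the real work in \cite{EG-EJM} lies, so your proof should be read as a correct reduction to those properties rather than a self-contained argument.
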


Since $r^{(0)} {\rm I}^G_{G_p} = r^{(0)}$, one has the following statement.

\begin{corollary} \label{cor:law-index}
One has
\[
r^{(0)}{\rm ind}^G(\omega; \pi^{-1}(\overline{\RR^n/G}),0) =
\sum_{[p] \in {\rm Sing}\, \widetilde{\omega}/G}
r^{(0)}{\rm ind}^{G_p}(\widetilde{\omega}; \pi^{-1}(\overline{\RR^n/G}),p) \, .
\]
\end{corollary}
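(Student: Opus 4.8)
The plan is to derive Corollary~\ref{cor:law-index} directly from Proposition~\ref{prop:law-index} by applying the group homomorphism $r^{(0)}: A(G) \to \ZZ$ to both sides. Since $r^{(0)}$ is additive, applying it to the sum over orbits commutes with the summation, so the only thing that must be checked is the claimed identity $r^{(0)} \circ {\rm I}^G_{G_p} = r^{(0)}$ on $A(G_p)$, after which the corollary follows immediately.

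The key step is therefore to verify that $r^{(0)}$ is unchanged under the induction map. It suffices to check this on generators, i.e.\ on classes $[G_p/H]$ for subgroups $H \subset G_p$, since these generate $A(G_p)$ as an abelian group and both maps in question are group homomorphisms. By definition of the induction map recalled in Proposition~\ref{prop:law-index}, ${\rm I}^G_{G_p}$ sends $[G_p/H]$ to $[G/H]$. Now recall from Section~\ref{sect:equivariant} that for $a = \sum_{H \subset G} a_H [G/H] \in A(G)$ one has $r^{(0)} a = \sum_{H \subset G} a_H$; in particular $r^{(0)}[G/H] = 1$ for every subgroup $H$. Applying this in both groups gives $r^{(0)}([G/H]) = 1 = r^{(0)}([G_p/H])$, so $r^{(0)}({\rm I}^G_{G_p}[G_p/H]) = r^{(0)}[G_p/H]$ on generators, and hence $r^{(0)} \circ {\rm I}^G_{G_p} = r^{(0)}$ by linearity.

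With this identity in hand, I would apply $r^{(0)}$ to the formula in Proposition~\ref{prop:law-index}:
\[
r^{(0)}{\rm ind}^G(\omega; \pi^{-1}(\overline{\RR^n/G}),0)
= \sum_{[p]} r^{(0)}\bigl({\rm I}^G_{G_p}({\rm ind}^{G_p}(\widetilde{\omega}; \pi^{-1}(\overline{\RR^n/G}),p))\bigr),
\]
and then replace each summand $r^{(0)} \circ {\rm I}^G_{G_p}$ by $r^{(0)}$ using the identity just proved. This yields exactly the asserted formula of the corollary.

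There is essentially no obstacle here: the statement is a formal consequence of Proposition~\ref{prop:law-index} together with the elementary compatibility between $r^{(0)}$ and induction. The only point requiring a moment's care is the combinatorial description of $r^{(0)}$ as the sum of coefficients $a_H$, which makes the invariance under induction transparent; this is precisely the conceptual reason that $r^{(0)}$, being the homomorphism that counts orbits on a $G$-set via the Euler characteristic of the quotient, does not see the difference between inducing up from $G_p$ and working directly in $G$. Thus the corollary is immediate once the identity $r^{(0)} {\rm I}^G_{G_p} = r^{(0)}$ is recorded, which is the content sketched above.
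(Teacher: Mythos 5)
Your proposal is correct and follows exactly the paper's route: the paper likewise deduces the corollary from Proposition~\ref{prop:law-index} by noting $r^{(0)}\,{\rm I}^G_{G_p}=r^{(0)}$ and applying the additive homomorphism $r^{(0)}$ to both sides. You merely spell out the verification of that identity on generators (using $r^{(0)}[G/H]=1$), which the paper leaves implicit.
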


One has a similar law of conservation of number for the signature of the residue pairing on the
$G$-invariant part of $\Omega_\omega$.

\begin{theorem} \label{thm:law-sgn}
One has
\[
{\rm sgn} \, B^G_\omega = \sum_{[p] \in {\rm Sing}\, \widetilde{\omega}/G}
{\rm sgn} \, B^{G_p}_{\widetilde{\omega},p} \, ,
\]
where the signature in a summand on the right hand side is computed at the point $p$.
\end{theorem}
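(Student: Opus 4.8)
The plan is to follow the Eisenbud--Levine--Khimshiashvili conservation-of-number argument, made $G$-equivariant and supplemented by a reality analysis, in parallel with Corollary~\ref{cor:law-index}. Fix a representative deformation $\widetilde{\omega}=\omega+t\,\theta$ with $\theta$ real and $G$-invariant, and a ball $B=B_\delta$ small enough that the singular points of $\widetilde{\omega}$ stay inside $B$ for $|t|$ small. First I would globalize: over the algebra of functions on $B$ (equivalently, over the base of the deformation) one forms the global analogue of $\Omega_{\widetilde{\omega}}$, and the local--global principle for the Grothendieck residue gives an orthogonal decomposition $\Omega_{\widetilde{\omega}}(B)\cong\bigoplus_{p\in{\rm Sing}\,\widetilde{\omega}}\Omega_{\widetilde{\omega},p}$ under which the global residue pairing is the orthogonal sum of the local residue pairings. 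Since $\widetilde{\omega}$ and $\theta$ are $G$-invariant, this decomposition is $G$-equivariant, the group $G$ permuting the summands by permuting the points $p$.

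Second, I would prove deformation invariance of the signature. Over the base of the deformation the module is finite and free and the residue pairing is a perfect pairing; its restriction to the $G$-invariant part stays nondegenerate (as already asserted in Section~\ref{sect:realquot}), hence so does its real part. A continuous family of nondegenerate real symmetric forms has locally constant signature, so the signature of the $G$-invariant global real residue pairing at generic $t$ equals its value at $t=0$, which is exactly ${\rm sgn}\,B^G_\omega$. Third, I would evaluate the generic-$t$ signature orbit by orbit. For a $G$-orbit with representative $p$ and isotropy $G_p$, the summands over the orbit assemble into the induced module ${\rm Ind}_{G_p}^G\Omega_{\widetilde{\omega},p}$, and Frobenius reciprocity identifies its $G$-invariant part with $\Omega_{\widetilde{\omega},p}^{G_p}$; because distinct points are residue-orthogonal, the restricted pairing on an invariant vector equals $|G/G_p|$ times $B^{G_p}_{\widetilde{\omega},p}$, a positive multiple, so the orbit contributes ${\rm sgn}\,B^{G_p}_{\widetilde{\omega},p}$. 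Summing over orbits yields the claimed formula.

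The main obstacle is the interaction with the real structure, and it is exactly here that Proposition~\ref{prop:quotient} enters. Complex conjugation acts on ${\rm Sing}\,\widetilde{\omega}$ compatibly with $G$, sending an orbit $[p]$ to $[\bar p]$. When $[p]=[\bar p]$, i.e.\ when $p$ lies on $\pi^{-1}(\overline{\RR^n/G})=\bigcup_{g}(\RR^n_{g+}\oplus i\RR^n_{g-})$, conjugation composed with a suitable group element is a real structure on $\Omega_{\widetilde{\omega},p}$ (using that $g_0^2\in G_p$ whenever $\bar p=g_0 p$), so $B^{G_p}_{\widetilde{\omega},p}$ is genuinely real and its signature is the honest local term. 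When $[p]\neq[\bar p]$ the two conjugate orbits together carry the swapping real structure and form a hyperbolic block of signature zero, so such orbits do not contribute. This is precisely what makes the signature formula match Corollary~\ref{cor:law-index}, where the off-variety singular points likewise contribute nothing. I expect the careful verification that taking $G$-invariants commutes with the residue-orthogonal decomposition and respects this real structure to be the most delicate step.
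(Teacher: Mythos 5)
Your overall architecture coincides with the paper's: pass to a deformation with only nondegenerate singular points, realize $B_\omega$ as the limit of the orthogonal sum of local residue pairings over ${\rm Sing}\,\widetilde{\omega}$, group the singular points into orbits of the group generated by $G$ and complex conjugation, and observe that a pair of complex-conjugate $G$-orbits contributes a hyperbolic block of signature zero while a self-conjugate orbit contributes the local term. The hyperbolic-block argument and the identification of the $G$-invariants over an orbit with $\Omega^{G_p}_{\widetilde{\omega},p}$ are sound; note that at a nondegenerate point the local module is one-dimensional, spanned by $d\xx$, on which $G_p$ acts through $\det\colon G\to\ZZ_2$, so $\Omega^{G_p}_{\widetilde{\omega},p}=0$ unless $G_p$ lies in the kernel of $\det$ --- consistent with your Frobenius-reciprocity reduction, and this is how the paper disposes of those orbits.

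The genuine gap is the step you explicitly defer at the end, and it is not a routine verification but the actual content of the theorem. For a self-conjugate orbit with $\overline{p}=g_0p$ and $\det g_0=-1$, the real vectors of the local module for the real structure $\tau=g_0\circ(\,\overline{\phantom{x}}\,)$ (well defined since $g_0^2\in G_p$, as you note) are spanned by $i\,d\xx$ rather than $d\xx$: the $G$-invariant real section takes the values $\pm i$ on the orbit, so the global invariant pairing restricted to this orbit evaluates to $-m/\calJ(p)$, not $+m/\calJ(p)$. Hence your claim that the orbit pairing is ``a positive multiple'' of $B^{G_p}_{\widetilde{\omega},p}$ is not meaningful until you specify in which real coordinates the local form on the right-hand side is computed. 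The paper computes it in the real coordinates $ix_1,\dots,ix_k,x_{k+1},\dots,x_n$ of the stratum $\RR^n_{g_0+}\oplus i\RR^n_{g_0-}$ (with $k=\dim\RR^n_{g_0-}$), where the Jacobian of $\widetilde{\omega}$ acquires the factor $(-1)^k$; the two signs agree precisely because $(-1)^k=\det g_0$. Without this bookkeeping the summands $B^{G_p}_{\widetilde{\omega},p}$ are not pinned down and the asserted equality cannot be checked; supplying it would complete your proof along essentially the paper's lines.
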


\begin{proof} 
The possibility to deform the 1-form $\widetilde{\omega}$ to a one with only non-degenerate singular
points permits us to assume that already $\widetilde{\omega}$ has this property. It is known that
the bilinear form $B_\omega$ is the limit (when the deformation parameter tends to zero) of the following
bilinear form $B_{\widetilde{\omega}}$ on the space $L_{{\rm Sing}\, \widetilde{\omega}}$ of $n$-forms of
the form $\varphi(\xx) d\xx$ where $d\xx=dx_1 \wedge \cdots \wedge dx_n$ and $\varphi$
is a real function on ${\rm Sing}\, \widetilde{\omega}$. (A function $\varphi$ on 
${\rm Sing}\, \widetilde{\omega}$ is called {\em real} if $\varphi(\overline{z})=\overline{\varphi(z)}$
for all  $z \in {\rm Sing}\, \widetilde{\omega}$.) For two real functions $\varphi$ and $\psi$, the value
of $B_{\widetilde{\omega}}$ is defined by
\begin{equation}\label{eqn:limit}
B_{\widetilde{\omega}}(\varphi(\xx) d\xx, \psi(\xx) d\xx) =
\sum_{z \in {\rm Sing}\, \widetilde{\omega}} \frac{\varphi(z)\psi(z)}{\calJ_{\widetilde{\omega}}(z)} \,,
\end{equation}
where $\calJ_{\widetilde{\omega}}$ is the Jacobian of the 1-form $\widetilde{\omega}$:
if $\widetilde{\omega}=\sum_{i=1}^n A_i(\xx) dx_i$, then
$\calJ_{\widetilde{\omega}}(\xx) = \det \left( \frac{\partial A_i(\xx)}{\partial x_j} \right)$.

Consider the action of the complex conjugation on ${\rm Sing}\, \widetilde{\omega}$. It commutes with
the action of $G$. Let $\calG$ be the group generated by $G$ and by the complex conjugation (the direct
product of $G$ and $\ZZ_2$). The set ${\rm Sing}\, \widetilde{\omega}$ is the disjoint union of the orbits
${\calG}p$ for representatives of the classes $[p] \in {\rm Sing}\, \widetilde{\omega}/\calG$. Each orbit
$\calG p$ either coincides with $G p$ (if $\overline{p} \in G p$) or is the disjoint union of $Gp$ and
$G{\overline{p}}$ (if $\overline{p} \not\in Gp$). The bilinear form $B_{\widetilde{\omega}}$ is the
direct sum of its restrictions to the corresponding subspaces
$L_{\calG p} \subset L_{{\rm Sing}\, \widetilde{\omega}}$  for $[p] \in {\rm Sing}\, \widetilde{\omega}/\calG$
and the action of $G$ preserves these spaces.
Let $\det: G \to \ZZ_2$ be the natural (determinant) homomorphism. (The action of an element $g \in G$
belongs to ${\rm SL}(n; \RR)$ if and only if $\det g = 1$.) Let $K$ be the kernel of $\det$.

Let $\calG p=Gp \sqcup G\overline{p}$. If the isotropy subgroup $G_p$ of the point $p$ is not contained
in $K$, then $L^G_{\calG p}=0$ and thus the signature of the corresponding summand is equal to zero.
If $G_p \subset K$, the space $L^G_{\calG p}$ is two-dimensional and is generated by the forms
$\varphi_{\rm Re} d\xx$ and $\varphi_{\rm Im} d\xx$ where the function
$\varphi_{\rm Re}$ has values $\pm 1$ on the points of $\calG p$ (so that $\varphi_{\rm Re}(p)=1$,
$\varphi_{\rm Re}(gz)= (\det g) \varphi_{\rm Re}(z)$, $\varphi_{\rm Re}(\overline{z})=\varphi_{\rm Re}(z)$)
and the function $\varphi_{\rm Im}$ has values $\pm i$ (so that
$\varphi_{\rm Im}(p)=i$, $\varphi_{\rm Im}(gz)=
(\det g)\varphi_{\rm Im}(\overline{z})$, $\varphi_{\rm Im}(\overline{z})=-\varphi_{\rm Im}(z)$). 
Equation~\ref{eqn:limit} gives the following matrix of the pairing $B_{\widetilde{\omega}}$ on these elements
\[
\left( \begin{array}{cc} m(\calJ^{-1} + \overline{\calJ}^{-1}) & m (\calJ^{-1} - \overline{\calJ}^{-1})i\\
m(\calJ^{-1} - \overline{\calJ}^{-1})i & m(\calJ^{-1} + \overline{\calJ}^{-1}) \end{array} \right) ,
\]
where $m$ is the number of elements in the orbit $Gp$, $\calJ:=\calJ(p)$. The signature of this matrix
is equal to zero and therefore pairs of complex conjugate $G$-orbits do not contribute to
${\rm sgn}\, B^G_{\widetilde{\omega}}$.

Let $\calG=Gp$. If $G_p \not\subset K$, then $L^G_{\calG p}=0$ as above and thus it gives zero impact to
${\rm sgn}\, B^G_{\widetilde{\omega}}$. The isotropy group $G_p$ acts non-trivially on $1 d\xx$
and therefore ${\rm sgn}\, B^G_{\widetilde{\omega},p}=0$.

Assume now that $\calG=Gp$ and $G_p \subset K$. Let $\overline{p}=g_0p$, $g_0 \in G$, that is,
$p \in i\RR^n_{g_0-} \oplus \RR^n_{g_0+}$ in terms of Proposition~\ref{prop:quotient}. The space
$\RR^n_{g_0-}$ is even-dimensional if $\det g_0=1$ and is odd-dimensional if $\det g_0=-1$. The space
$L^G_{\calG p}$ is one-dimensional. If $\det g_0=1$ (or $\det g_0=-1$), then the space $L^G_{\calG p}$
is generated by the $n$-form $\varphi d\xx$, where the (real) function $\varphi$ on $G_p$ is such
that $\varphi(gz)=(\det g)\varphi(z)$ for $g \in G$ and it has values $\pm 1$ (or $\pm i$) on the points
of $Gp$ so that $\varphi(p)=1$, $\varphi(\overline{p})=1$ (or $\varphi(p)=i$, $\varphi(\overline{p})=-i$
respectively). If $\det g_0=1$, then the value of the quadratic form $B_{\widetilde{\omega}}$ on $\varphi$
is equal to $\frac{m}{\calJ(p)}$, where $m=|Gp|$. Otherwise (if $\det g_0=-1$) it is equal to
$-\frac{m}{\calJ(p)}$. Real coordinates on $\pi^{-1}(\overline{\RR^n/G})$ at the point $p$ are
$ix_1, \ldots , ix_k, x_{k+1}, \ldots , x_n$, where $x_1, \ldots x_k$ are the coordinates on $\RR^n_{g_0-}$
and $x_{k+1}, \ldots , x_n$ are the coordinates on $\RR^n_{g_0+}$. The value of the Jacobian of
$\widetilde{\omega}$ in these coordinates differs from the value of the Jacobian in the
coordinates $x_1, \ldots , x_n$ by the sign $(-1)^k$. Thus the value of the quadratic form
$B_{\widetilde{\omega},p}$ on the ($G_p$-invariant!) $n$-form $d\xx$ is equal
to $(-1)^k \frac{1}{\calJ(p)}$. Thus in this case the impact of $L^G_{\calG p}$ to the signature of
$B_{\widetilde{\omega}}$ also coincides with the signature of $B_{\widetilde{\omega},p}$.
\end{proof}

\section{Reduction to one-dimensional representations} \label{sect:reduction}
Due to Theorem~\ref{thm:law-sgn}, the statement of Theorem~\ref{thm:main} can be verified for $G$-invariant
1-forms which are non-degenerate at the origin. Deforming the 1-form in the class of non-degenerate (and
$G$-invariant) ones, we can assume that the 1-form $\omega$ has the shape
\[
\omega= \sum_{i,j=1}^n \ell_{ij} x_j dx_i.
\]
(We shall call 1-forms of this type {\em linear} ones.) 

Assume that 
\begin{equation} \label{eqn:alpha}
\RR^n = \bigoplus_\alpha \RR^n_\alpha
\end{equation}
is the decomposition of the $G$-module $\RR^n$ into parts corresponding to different irreducible
representations $\alpha$ of the group $G$. (Each representation $\alpha$ is either one- or two-dimensional.)

\begin{proposition} \label{prop:schur}
The 1-form $\omega$ is the direct sum of 1-forms $\omega_\alpha$ on $\RR^n_\alpha$.
\end{proposition}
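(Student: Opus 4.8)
The plan is to encode the linear 1-form $\omega$ as a $G$-invariant bilinear form and then read off the decomposition from Schur's lemma. First I would note that a linear 1-form $\omega=\sum_{i,j}\ell_{ij}x_j\,dx_i$ is completely determined by its matrix $L=(\ell_{ij})$, which I regard as the bilinear form
\[
B_\omega(u,v):=\sum_{i,j=1}^n \ell_{ij}\,u_j\,v_i
\]
on $\RR^n$. A direct computation of the pullback $g^\ast\omega$ (substituting $\xx\mapsto g\xx$ and $dx_i\mapsto d(g\xx)_i$) shows that $\omega$ is $G$-invariant exactly when $B_\omega(gu,gv)=B_\omega(u,v)$ for all $g\in G$, i.e.\ exactly when $B_\omega$ is a $G$-invariant bilinear form on the $G$-module $\RR^n$.

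Next I would fix a $G$-invariant inner product $\langle\cdot,\cdot\rangle$ on $\RR^n$, obtained by averaging the standard one over $G$; with respect to it the decomposition (\ref{eqn:alpha}) is orthogonal, so distinct summands $\RR^n_\alpha$ and $\RR^n_\beta$ are perpendicular. Writing $B_\omega(u,v)=\langle\widehat L u,v\rangle$ defines an endomorphism $\widehat L$ of $\RR^n$, and combining the $G$-invariance of $B_\omega$ with that of the inner product forces $\widehat L$ to be $G$-equivariant, i.e.\ $\widehat L\circ g=g\circ\widehat L$ for every $g\in G$.

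The decisive step is then Schur's lemma: a $G$-equivariant endomorphism preserves each isotypic component, since its $(\alpha,\beta)$-block lies in $\mathrm{Hom}_G(\RR^n_\beta,\RR^n_\alpha)$, which vanishes for $\alpha\neq\beta$ because $\RR^n_\alpha$ and $\RR^n_\beta$ are isotypic of distinct (one- or two-dimensional) irreducible types. Hence $\widehat L(\RR^n_\alpha)\subseteq\RR^n_\alpha$, and using orthogonality of (\ref{eqn:alpha}) we get, for $\alpha\neq\beta$,
\[
B_\omega(\RR^n_\alpha,\RR^n_\beta)=\langle\widehat L(\RR^n_\alpha),\RR^n_\beta\rangle\subseteq\langle\RR^n_\alpha,\RR^n_\beta\rangle=0 .
\]
Thus $B_\omega$ couples only components lying in the same summand; in a basis adapted to (\ref{eqn:alpha}) this reads $\ell_{ij}=0$ whenever $i$ and $j$ lie in different blocks, which is precisely the assertion that $\omega=\sum_\alpha\omega_\alpha$ with each $\omega_\alpha$ a linear 1-form on $\RR^n_\alpha$.

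I expect the only delicate point to be the bookkeeping that upgrades the $G$-invariance of the 1-form into genuine $G$-equivariance of $\widehat L$, rather than into the weaker matrix relation $g^{\mathrm t}Lg=L$ (with $g$ acting by its matrix). This is exactly what the passage to a $G$-invariant inner product is designed to fix, and the same inner product supplies the orthogonality of (\ref{eqn:alpha}) needed in the final display. Once equivariance is established, the vanishing of the off-diagonal blocks is a routine application of Schur's lemma, requiring only that each $\RR^n_\alpha$ is isotypic of a single irreducible type.
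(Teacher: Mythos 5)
Your proof is correct and follows essentially the same route as the paper: fix a $G$-invariant inner product (the paper simply assumes $\sum x_i^2$ is already $G$-invariant), observe that the matrix $(\ell_{ij})$ then defines a $G$-equivariant operator, and apply Schur's lemma to see that it preserves the isotypic components $\RR^n_\alpha$. Your version merely spells out the intermediate step through the associated bilinear form and the orthogonality of the decomposition, which the paper leaves implicit.
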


\begin{proof} 
Assume that $\sum x_i^2$ is a $G$-invariant quadratic form on $\RR^n$. Then the mapping $\RR^n \to \RR^n$
defined by 
\[ 
(x_1, \ldots , x_n) \mapsto (\sum \ell_{1j} x_j, \ldots , \sum \ell_{nj} x_j)
\]
is a $G$-invariant operator. According to Schur's lemma, this operator is the direct sum of operators on
the subspaces $\RR^n_\alpha$.
\end{proof}

\begin{proposition} \label{prop:connected}
The space of $G$-invariant non-degenerate linear 1-forms on the subspace $\RR^n_\alpha$ is connected if
$\dim \alpha=2$ and has two components if $\dim \alpha=1$ and $\RR^n_\alpha \neq 0$.
\end{proposition}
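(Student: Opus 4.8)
The plan is to reduce the statement to a standard fact about connected components of groups of invertible matrices over $\RR$ and over $\CC$, via Schur's lemma. First I would make explicit the identification already used in the proof of Proposition~\ref{prop:schur}: a $G$-invariant linear 1-form $\sum \ell_{ij} x_j dx_i$ on $\RR^n_\alpha$ is the same datum as the linear operator $L = (\ell_{ij})$ on $\RR^n_\alpha$, and this operator is $G$-equivariant precisely because the 1-form is $G$-invariant. Since the 1-form is linear, its Jacobian at the origin is exactly the matrix $L$, so the 1-form is non-degenerate at $0$ if and only if $L$ is invertible. Thus the space in question is the set of invertible elements of the endomorphism algebra ${\rm End}_G(\RR^n_\alpha)$, and I only need to count its connected components.

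Next I would compute ${\rm End}_G(\RR^n_\alpha)$ by Schur's lemma. Writing $\RR^n_\alpha$ as a direct sum of $m$ copies of the irreducible $\alpha$, one has ${\rm End}_G(\RR^n_\alpha) \cong M_m(D)$, where $D = {\rm End}_G(\alpha)$ is the division algebra commuting with the $G$-action on $\alpha$. Here I would use that $G$ is abelian: its complex irreducibles are all one-dimensional, so a real irreducible $\alpha$ is either absolutely irreducible of real type ($\dim \alpha = 1$, $D = \RR$) or of complex type coming from a conjugate pair of characters ($\dim \alpha = 2$, $D = \CC$); the quaternionic case does not occur. In the first case $m = \dim \RR^n_\alpha$ and the invertible operators form ${\rm GL}(m; \RR)$; in the second case $m = \frac{1}{2}\dim \RR^n_\alpha$ and they form ${\rm GL}(m; \CC)$.

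Finally I would invoke the elementary topological facts that ${\rm GL}(m; \CC)$ is connected while ${\rm GL}(m; \RR)$ has exactly two connected components, distinguished by the sign of the determinant. The first gives connectedness when $\dim \alpha = 2$, and the second gives two components when $\dim \alpha = 1$ (and $\RR^n_\alpha \neq 0$, so that $m \geq 1$), as claimed. I do not expect any serious obstacle here; the only point requiring care is the Schur-lemma identification of the division algebra $D$, in particular confirming that abelianness of $G$ rules out a quaternionic endomorphism ring and forces the dichotomy $\dim\alpha\in\{1,2\}$ to match $D\in\{\RR,\CC\}$.
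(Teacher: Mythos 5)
Your argument is correct and follows essentially the same route as the paper: identify $G$-invariant linear 1-forms with $G$-equivariant operators, recognize the space of non-degenerate ones as ${\rm GL}(s,\RR)$ when $\dim\alpha=1$ and ${\rm GL}(s,\CC)$ when $\dim\alpha=2$, and conclude from the standard component counts. The only cosmetic difference is that you invoke the general Schur/Frobenius classification of the commuting division algebra (ruling out $\HH$ by abelianness), whereas the paper verifies $D=\CC$ in the two-dimensional case by hand via the splitting of the complexification into $\beta\oplus\overline{\beta}$ and the reality constraint $z_2=\overline{z_1}$.
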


\begin{proof} We shall identify linear 1-forms with operators as in Proposition~\ref{prop:schur}. If the
representation $\alpha$ is one-dimensional, then all operators $\RR^n_\alpha \to \RR^n_\alpha$ are
$G$-equivariant. Thus the space of non-degenerate $G$-invariant linear 1-forms on $\RR^n_\alpha$ can be
identified with ${\rm GL}(s,\RR)$, where $s=\dim\RR^n_\alpha$
and it has two connected components if $s \neq 0$.
The complexification of a two-dimensional representation $\alpha$ is the sum of two one-dimensional complex
representations $\beta$ and $\overline{\beta}$. A $G$-equivariant operator from the space of the
representation $\alpha$ to itself splits into operators from $\beta$ to itself (given by the multiplication
by $z_1$) and from $\overline{\beta}$ to itself (given by the multiplication by $z_2$). The fact that
the operator is real yields that $z_2=\overline{z_1}$. Therefore the space of operators from $\alpha$
to itself can be identified with $\CC$ and the space of non-degenerate $G$-invariant 1-forms on $\RR^n_\alpha$
can be identified with ${\rm GL}(s,\CC)$ ($s=\dim\RR^n_\alpha/2$) which is connected.
\end{proof}

Propositions~\ref{prop:schur} and \ref{prop:connected} imply that it is sufficient to verify the statement
of Theorem~\ref{thm:main} for a 1-form $\omega$ such that its restriction to $\RR^n_\alpha$ is $d(\sum x_i^2)$
if $\alpha$ is two-dimensional and is $d(\sum \pm x_i^2)$ if $\alpha$ is one-dimensional. (In the latter case
one can assume that the number of minus signs is $\leq 1$.)

Let, as above, $\omega=\bigoplus_\alpha \omega_\alpha$, where $\omega_\alpha$ is of type $d(\sum \pm x_i^2)$
(with only plus signs for two-dimensional representations). Let $\omega'$ be the direct sum of the 1-forms
$\omega_\alpha$ with one-dimensional $\alpha$ (defined on the direct sum $(\RR^n)'$ of the subspaces
$\RR^n_\alpha$ with $\dim \alpha =1$).

\begin{proposition} \label{prop:sgn'}
One has
\begin{eqnarray}
{\rm sgn} \, B^G_{\omega'} & = & {\rm sgn} \, B^G_\omega, \label{eqn:sgn'} \\ 
r^{(0)}{\rm ind}^G(\omega'; \pi^{-1}(\overline{(\RR^n)'/G}),0) &=& 
r^{(0)}{\rm ind}^G(\omega; \pi^{-1}(\overline{\RR^n/G}),0) \, . \label{eqn:index'}
\end{eqnarray}
\end{proposition}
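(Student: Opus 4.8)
The plan is to split off the two–dimensional representations and to show that they contribute only a trivial factor to each of the two quantities. Write $\RR^n = (\RR^n)' \oplus (\RR^n)''$, where $(\RR^n)''$ is the direct sum of the components $\RR^n_\alpha$ with $\dim\alpha=2$, and correspondingly $\omega = \omega' \oplus \omega''$ with $\omega''=\bigoplus_{\dim\alpha=2}\omega_\alpha$ of type $d(\sum x_i^2)$. The two facts I would lean on throughout are: on each two–dimensional component $\RR^n_\alpha$ the group acts by rotations, so $\det = 1$ on $\RR^n_\alpha$; hence the volume form on $\RR^n_\alpha$ is $G$–invariant, and for every $g$ the eigenspaces $(\RR^n)''_{g\pm}$ (sums of those $\RR^n_\alpha$ with $\alpha(g)=\pm\,{\rm id}$) are even–dimensional.

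For the signature identity \eqref{eqn:sgn'} I would use the multiplicativity of the residue pairing under the splitting $\omega=\omega'\oplus\omega''$. Under the natural isomorphism $\Omega^\CC_\omega \cong \Omega^\CC_{\omega'}\otimes_\CC\Omega^\CC_{\omega''}$, sending $\varphi'\,d\xx'\otimes\varphi''\,d\xx''$ to $\varphi'\varphi''\,d\xx$, the pairing $B^\CC_\omega$ becomes the tensor product $B^\CC_{\omega'}\otimes B^\CC_{\omega''}$, and this isomorphism is $G$–equivariant because $G$ respects the decomposition. Now $\Omega^\CC_{\omega''}$ is one–dimensional, generated by the real, $G$–invariant form $d\xx''$, and the value of $B^\CC_{\omega''}$ on it is $\prod_{\dim\alpha=2}\calJ_{\omega_\alpha}^{-1}>0$, since each Jacobian $\calJ_{\omega_\alpha}=\det{\rm Hess}(x^2+y^2)=4$ is positive. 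Thus $\Omega^\CC_{\omega''}$ is a trivial one–dimensional $G$–module carrying a positive–definite pairing, so passing to $G$–invariants and then to real parts gives $\Omega^G_\omega\cong\Omega^G_{\omega'}\otimes\Omega_{\omega''}$ and $B^G_\omega=B^G_{\omega'}\otimes(\text{positive scalar})$. Tensoring a symmetric form by a positive–definite rank–one form does not alter its signature, whence ${\rm sgn}\,B^G_\omega={\rm sgn}\,B^G_{\omega'}$.

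For the index identity \eqref{eqn:index'} I would argue by deformation. Using Proposition~\ref{prop:morse} deform $\omega'$ to a $G$–invariant $\widetilde{\omega'}$ with only nondegenerate zeros; since $\omega''$ is already nondegenerate and linear, $\widetilde\omega=\widetilde{\omega'}\oplus\omega''$ is nondegenerate with ${\rm Sing}\,\widetilde\omega={\rm Sing}\,\widetilde{\omega'}\times\{0\}$ and $G_{(p',0)}=G_{p'}$, so the $G$–orbits of zeros of $\widetilde\omega$ correspond bijectively to those of $\widetilde{\omega'}$. Applying the law of conservation of number (Corollary~\ref{cor:law-index}) to $\omega$ on $\pi^{-1}(\overline{\RR^n/G})$ and to $\omega'$ on $\pi^{-1}(\overline{(\RR^n)'/G})$ reduces the claim to the per–point equality $r^{(0)}{\rm ind}^{G_{p'}}(\widetilde\omega;\pi^{-1}(\overline{\RR^n/G}),(p',0))=r^{(0)}{\rm ind}^{G_{p'}}(\widetilde{\omega'};\pi^{-1}(\overline{(\RR^n)'/G}),p')$ for each orbit representative $p'$. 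By Proposition~\ref{prop:quotient}, near $(p',0)$ the set $\pi^{-1}(\overline{\RR^n/G})$ is the union, over the $g$ with $\overline{p'}=gp'$, of the products $\bigl((\RR^n)'_{g+}\oplus i(\RR^n)'_{g-}\bigr)\oplus\bigl((\RR^n)''_{g+}\oplus i(\RR^n)''_{g-}\bigr)$; on each second factor $\omega''$ restricts to the differential of a nondegenerate quadratic form of Morse index $\dim(\RR^n)''_{g-}$, which is even, so that factor carries $1$–form index $+1$.

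The main obstacle is precisely this last reduction: establishing the local multiplicativity of the equivariant radial index over the stratified germ $\pi^{-1}(\overline{\RR^n/G})$ at $(p',0)$, namely that appending the $\omega''$–factor, whose index on every totally real subspace $(\RR^n)''_{g+}\oplus i(\RR^n)''_{g-}$ equals $+1$, leaves $r^{(0)}{\rm ind}^{G_{p'}}$ unchanged. The delicate point is that the strata of the germ of $\pi^{-1}(\overline{\RR^n/G})$ at $(p',0)$ are indexed by the same data (the $g$ with $\overline{p'}=gp'$, acted on by $G_{p'}$) as the strata of the germ of $\pi^{-1}(\overline{(\RR^n)'/G})$ at $p'$, and on each matching stratum the index is merely multiplied by the trivial factor $+1$ coming from the even parity of $\dim(\RR^n)''_{g-}$. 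Matching these strata together with the induction and $r^{(0)}$ bookkeeping is the subtle part; the remaining ingredients are the formal multiplicativity of residues and of indices of product $1$–forms.
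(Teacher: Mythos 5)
Your argument for \eqref{eqn:sgn'} is correct and simply makes explicit what the paper dismisses as obvious: $\Omega^{\CC}_{\omega''}$ is one--dimensional, spanned by the $G$--invariant real form $d\xx''$ (the two--dimensional irreducible real representations of an abelian group land in ${\rm SO}(2)$, so $\det=1$ on $(\RR^n)''$), the residue pairing is multiplicative under the splitting of variables, and the rank--one factor is positive. That half is fine.

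The index identity \eqref{eqn:index'} is where your proof has a genuine gap, and you say so yourself. Two problems. First, the deformation step is vacuous: at this stage $\omega'$ is already a nondegenerate linear form with the origin as its only zero, so Corollary~\ref{cor:law-index} returns you to exactly the statement you started with, namely comparing ${\rm ind}^G(\omega;\pi^{-1}(\overline{\RR^n/G}),0)$ with ${\rm ind}^G(\omega';\pi^{-1}(\overline{(\RR^n)'/G}),0)$ at the origin. Second, the ``local multiplicativity'' you would need is not available as stated, because by Proposition~\ref{prop:quotient} the germ of $\pi^{-1}(\overline{\RR^n/G})$ is a \emph{union} over $g$ of products whose second factors $(\RR^n)''_{g+}\oplus i(\RR^n)''_{g-}$ vary with $g$; it is not a product of $\pi^{-1}(\overline{(\RR^n)'/G})$ with a fixed space, so multiplicativity of the radial index for product $1$--forms does not apply, and some inclusion--exclusion over the overlaps is unavoidable. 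The paper closes exactly this gap by stratifying the complement $\pi^{-1}(\overline{\RR^n/G})\setminus\pi^{-1}(\overline{(\RR^n)'/G})$ into $G$--invariant strata $\Xi$ and proving ${\rm ind}^G(\omega;\Xi,0)=0$ by induction on dimension: on the closure $\overline{\Xi}$ the index splits as the index on $\overline{\Xi}'$ plus the (inductively vanishing) contributions of lower strata plus $k[G/G_{\Xi}]$ from the open stratum, where $G_\Xi$ is the common isotropy group; applying the number--of--elements homomorphism $A(G)\to\ZZ$ reduces the vanishing of $k$ to the equality of the \emph{ordinary} indices on $\overline{\Xi}$ and $\overline{\Xi}'$, which holds precisely because of the fact you also isolated --- the quadratic form on $\prod\RR^n_\beta\times\prod i\RR^n_\beta$ has even numbers of plus and minus signs, hence local degree $+1$. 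So you have correctly identified every ingredient, including the decisive parity observation, but the actual mechanism that converts ``each extra factor has index $+1$'' into the equality of equivariant indices on the non--product germ is missing from your write--up, and it is the substance of the paper's proof.
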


\begin{proof} Equation~(\ref{eqn:sgn'}) is obvious.

To show (\ref{eqn:index'}), we shall prove a somewhat stronger statement:
\[
{\rm ind}^G(\omega'; \pi^{-1}(\overline{(\RR^n)'/G}),0) = 
{\rm ind}^G(\omega; \pi^{-1}(\overline{\RR^n/G}),0) \, .
\]
The complement $\pi^{-1}(\overline{\RR^n/G}) \setminus \pi^{-1}(\overline{(\RR^n)'/G}),0)$ is the disjoint
union of ($G$-invariant) strata of the form
\[
\Xi = \prod_{{\rm some}\, \beta: \atop \dim \beta=2}
(\RR^n_\beta)^\ast \times \prod_{{\rm some}\, \beta: \atop \dim \beta=2} (i\RR^n_\beta)^\ast \times
\prod_{{\rm some}\, \alpha: \atop \dim \alpha=1} (\RR^n_\alpha)^\ast \times
\prod_{{\rm some}\, \alpha: \atop \dim \alpha=1} (i\RR^n_\alpha)^\ast \, ,
\]
where $(\RR^n_\bullet)^\ast:= \RR^n_\bullet \setminus \{ 0 \}$.

We have the equality 
\[
{\rm ind}^G(\omega; \pi^{-1}(\overline{\RR^n/G}),0) =
{\rm ind}^G(\omega'; \pi^{-1}(\overline{(\RR^n)'/G}),0) + \sum _\Xi {\rm ind}^G(\omega; \Xi,0) \, .
\]
We shall prove that ${\rm ind}^G(\omega; \Xi,0)=0$ for each $\Xi$. Assume that this is already proven
for strata of lower dimensions. The equivariant index ${\rm ind}^G(\omega; \overline{\Xi},0)$ on the closure
\[
\overline{\Xi} = \prod_{{\rm some}\, \beta: \atop \dim \beta=2}
\RR^n_\beta \times \prod_{{\rm some}\, \beta: \atop \dim \beta=2} i\RR^n_\beta \times
\prod_{{\rm some}\, \alpha: \atop \dim \alpha=1} \RR^n_\alpha \times
\prod_{{\rm some}\, \alpha: \atop \dim \alpha=1} i\RR^n_\alpha
\]
of the stratum $\Xi$  satisfies the equality
\begin{equation} \label{eqn:xi}
{\rm ind}^G(\omega; \overline{\Xi},0) = 
{\rm ind}^G(\omega; \overline{\Xi}',0) + \sum_{\Sigma \subsetneq \overline{\Xi}}
{\rm ind}^G(\omega; \Sigma,0) + {\rm ind}^G(\omega; \Xi,0) \, ,
\end{equation} 
where 
\[ 
\overline{\Xi}' = \prod_{{\rm some}\, \alpha: \atop \dim \alpha=1}
\RR^n_\alpha \times \prod_{{\rm some}\, \alpha: \atop \dim \alpha=1} i\RR^n_\alpha 
\]
and the sum is over all strata of lower dimensions inside $\overline{\Xi}$. This sum is assumed to be equal
to zero. The latter summand is equal to $k[G/G_\Xi]$, where $k$ is an integer and $G_\Xi$ is the isotropy
group of each point of $\Xi$. The natural homomorphism $A(G) \to \ZZ$ sends the class of a $G$-set to its
number of elements. The number of elements of ${\rm ind}^G(\omega; \overline{\Xi},0)$ or of
${\rm ind}^G(\omega; \overline{\Xi}',0)$ is the usual (integer valued) index of $\omega$ on $\overline{\Xi}$
or on $\overline{\Xi}'$ respectively. These two (usual) indices coincide since the 1-form $\omega$ on
$\prod \RR^n_\beta \times \prod i\RR^n_\beta$ is the differential of a quadratic function with even
numbers of plus and minus signs. Therefore $k=0$ and  ${\rm ind}^G(\omega; \Xi,0)=0$.
\end{proof}

Proposition~\ref{prop:sgn'} permits to verify the statement of Theorem~\ref{thm:main} for the space
$\RR^n$ with only one-dimensional irreducible representations of $G$. We shall show that one can assume
in addition that $\RR^n$ does not contain the trivial representation ${\bf 1}$.
Let
\[
(\RR^n)'':= \bigoplus_{\dim \alpha =1 \atop \alpha \neq {\bf 1}} \RR^n_\alpha
\]
and let $\omega'':=\omega|_{(\RR^n)''}$. 

\begin{proposition} \label{prop:sgn''}
If $\omega_{\bf 1}=d(\sum x_i^2)$, then one has 
\begin{eqnarray}
{\rm sgn} \, B^G_{\omega''} & = & {\rm sgn} \, B^G_\omega, \label{eqn:sgn''} \\ 
r^{(0)}{\rm ind}^G(\omega''; \pi^{-1}(\overline{(\RR^n)''/G}),0) &=& 
r^{(0)}{\rm ind}^G(\omega; \pi^{-1}(\overline{\RR^n/G}),0) \, . \label{eqn:index''}
\end{eqnarray}
\end{proposition}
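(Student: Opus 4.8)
The plan is to factor off the trivial summand $\RR^n_{\bf 1}$ and to show that it contributes a positive, one-dimensional piece to the residue pairing and the unit factor $+1$ to the equivariant index, so that neither side of (\ref{eqn:sgn''})--(\ref{eqn:index''}) changes. The geometric input, which I would record first, is that the trivial representation satisfies $\alpha(g)={\rm id}$ for every $g \in G$, so that $\RR^n_{\bf 1}\subset\RR^n_{g+}$ and $\RR^n_{\bf 1}\cap\RR^n_{g-}=0$ for all $g$. Hence in Proposition~\ref{prop:quotient} every piece $\RR^n_{g+}\oplus i\RR^n_{g-}$ splits off the factor $\RR^n_{\bf 1}$, and, writing $V'':=\pi^{-1}(\overline{(\RR^n)''/G})$, one obtains
\[
\pi^{-1}(\overline{\RR^n/G})=\RR^n_{\bf 1}\times V''
\]
as a $G$-space with $G$ acting trivially on the first factor; correspondingly $\omega=\omega_{\bf 1}\oplus\omega''$.

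For (\ref{eqn:sgn''}) I would use the multiplicativity of the residue pairing under the splitting $\omega=\omega_{\bf 1}\oplus\omega''$, namely $\Omega_\omega=\Omega_{\omega_{\bf 1}}\otimes\Omega_{\omega''}$ with $B_\omega=B_{\omega_{\bf 1}}\otimes B_{\omega''}$. Since $G$ acts trivially on $\RR^n_{\bf 1}$, the module $\Omega_{\omega_{\bf 1}}$ is entirely $G$-invariant; it is one-dimensional, and because $\omega_{\bf 1}=d(\sum x_i^2)$ has Jacobian $2^{\dim\RR^n_{\bf 1}}>0$ the value of $B_{\omega_{\bf 1}}$ on its generator equals $2^{-\dim\RR^n_{\bf 1}}>0$. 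Therefore $\Omega^G_\omega=\Omega_{\omega_{\bf 1}}\otimes\Omega^G_{\omega''}$ and $B^G_\omega$ is the tensor product of a positive-definite one-dimensional form with $B^G_{\omega''}$, whence ${\rm sgn}\,B^G_\omega={\rm sgn}\,B^G_{\omega''}$.

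For (\ref{eqn:index''}) I would deform and invoke the law of conservation of number. By Proposition~\ref{prop:morse} applied to $(\RR^n)''$, choose a $G$-invariant deformation $\widetilde{\omega}''$ of $\omega''$ with only non-degenerate singular points, and deform $\omega$ by $\omega_{\bf 1}\oplus\widetilde{\omega}''$ on $\RR^n_{\bf 1}\times V''$. Since $\omega_{\bf 1}$ vanishes only at the origin of $\RR^n_{\bf 1}$ and non-degenerately, the singular points on the variety are exactly the points $(0,p)$ with $p\in{\rm Sing}\,\widetilde{\omega}''$, and $G_{(0,p)}=G_p$ because $0\in\RR^n_{\bf 1}$ is $G$-fixed. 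At each such point the radial factor $\omega_{\bf 1}$ contributes the unit $1\in A(G_p)$ (it is radial of index $+1$ with $G_p$ acting trivially on $\RR^n_{\bf 1}$), so that ${\rm ind}^{G_p}(\omega_{\bf 1}\oplus\widetilde{\omega}'';\RR^n_{\bf 1}\times V'',(0,p))={\rm ind}^{G_p}(\widetilde{\omega}'';V'',p)$. Summing over the orbits and comparing Proposition~\ref{prop:law-index} for $\omega$ with the same proposition for $\omega''$ on $V''$ gives term by term the stronger equality ${\rm ind}^G(\omega;\pi^{-1}(\overline{\RR^n/G}),0)={\rm ind}^G(\omega'';V'',0)$; applying $r^{(0)}$ yields (\ref{eqn:index''}).

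The main obstacle --- really the only delicate point --- is the local multiplicativity of the equivariant index at the split singular points $(0,p)$, together with the verification that the ambient variety genuinely factors as a product; once the product structure of Proposition~\ref{prop:quotient} is established, both halves reduce to standard multiplicativity statements. I would also stress where the hypothesis $\omega_{\bf 1}=d(\sum x_i^2)$ is essential: the all-plus normalization is exactly what makes the radial factor have index $+1$ and the trivial factor of the residue pairing positive definite, whereas an odd number of minus signs in the trivial directions would introduce a sign $-1$ into both normalizations and invalidate each equation.
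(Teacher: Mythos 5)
Your proposal is correct and follows essentially the same route as the paper: the splitting $\pi^{-1}(\overline{\RR^n/G})=\RR^n_{\bf 1}\times\pi^{-1}(\overline{(\RR^n)''/G})$ with $G$ acting trivially on the first factor, the positivity of the one-dimensional form $B_{\omega_{\bf 1}}$ for (\ref{eqn:sgn''}), and the radiality of $\omega_{\bf 1}$ (index $1$) for (\ref{eqn:index''}). The only difference is that you interpose a Morse deformation of $\omega''$ before invoking multiplicativity of the equivariant index with the radial factor, which is harmless but unnecessary, since that multiplicativity can be applied directly at the origin as the paper does.
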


\begin{proof} Again (\ref{eqn:sgn''}) is obvious. 

The equation 
\[
{\rm ind}^G(\omega''; \pi^{-1}(\overline{(\RR^n)''/G}),0) = 
{\rm ind}^G(\omega; \pi^{-1}(\overline{\RR^n/G}),0) 
\]
follows from the fact that $\pi^{-1}(\overline{(\RR^n)'/G}) =
\pi^{-1}(\overline{(\RR^n)''/G}) \times \RR^n_{\bf 1}$ and the 1-form $\omega_{\bf 1}$ on $\RR^n_{\bf 1}$
is radial.
\end{proof}

\begin{proposition} \label{prop:sgn3}
Let $\omega_{\bf 1}=d(-x_1^2+ \sum_{i>1} x_i^2)$ (that is, it is the differential of a quadratic function
with one minus sign), let $\omega_{\bf 1}':=d(\sum_{i>1} x_i^2)$ and let
$\omega':=\omega_{\bf 1}' \oplus \bigoplus_{\alpha \neq {\bf 1}} \omega_\alpha$. Then one has 
\begin{eqnarray}
{\rm sgn} \, B^G_\omega & = & - {\rm sgn} \, B^G_{\omega'}, \label{eqn:sgn3} \\ 
r^{(0)}{\rm ind}^G(\omega; \pi^{-1}(\overline{\RR^n/G}),0) &=& 
-r^{(0)}{\rm ind}^G(\omega'; \pi^{-1}(\overline{\RR^n/G}),0) \, , \label{eqn:index3}
\end{eqnarray}
and therefore
\[
{\rm sgn} \, B^G_\omega = r^{(0)}{\rm ind}^G(\omega; \pi^{-1}(\overline{\RR^n/G}),0) \, .
\]
\end{proposition}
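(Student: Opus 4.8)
The plan is to prove the two sign‑change identities (\ref{eqn:sgn3}) and (\ref{eqn:index3}) separately, each by exhibiting a product (K\"unneth‑type) factorization along the splitting $\RR^n=\RR^n_{\bf 1}\oplus(\RR^n)''$ in which the trivial summand contributes a clean scalar, and then to deduce the final equality by feeding $\omega'$ into Proposition~\ref{prop:sgn''}. The observation common to both identities is that $\omega$ and $\omega'$ differ only in their restriction to the trivial summand $\RR^n_{\bf 1}$: there $\omega_{\bf 1}=d(-x_1^2+\sum_{i>1}x_i^2)$ carries exactly one negative direction, while $\omega_{\bf 1}'$ is positive definite. Since $G$ acts trivially on $\RR^n_{\bf 1}$, this block enters every computation as an honest (non‑equivariant) scalar factor whose sign is $-1$ in the first case and $+1$ in the second. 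Write $n_0:=\dim\RR^n_{\bf 1}$.

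For (\ref{eqn:sgn3}) I would use that the residue pairing is multiplicative over direct sums: the splitting $\omega=\omega_{\bf 1}\oplus\omega''$ gives $\Omega_\omega\cong\Omega_{\omega_{\bf 1}}\otimes_\RR\Omega_{\omega''}$ with $B_\omega=B_{\omega_{\bf 1}}\otimes B_{\omega''}$. Because $G$ acts trivially on $\RR^n_{\bf 1}$, the one‑dimensional module $\Omega_{\omega_{\bf 1}}$, generated by $dx_1\wedge\cdots\wedge dx_{n_0}$, is a trivial $G$‑module, so the invariant parts factor as $\Omega^G_\omega\cong\Omega_{\omega_{\bf 1}}\otimes\Omega^G_{\omega''}$ and $B^G_\omega=B_{\omega_{\bf 1}}\otimes B^G_{\omega''}$. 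The signature of a tensor product of pairings is the product of the signatures, and $B_{\omega_{\bf 1}}$ has signature ${\rm sgn}\,\calJ_{\omega_{\bf 1}}(0)=-1$ (the Jacobian of $d(-x_1^2+\sum_{i>1}x_i^2)$ being $-2^{n_0}$), against $+1$ for the positive definite $\omega_{\bf 1}'$. Hence ${\rm sgn}\,B^G_\omega=-{\rm sgn}\,B^G_{\omega''}$ and ${\rm sgn}\,B^G_{\omega'}=+{\rm sgn}\,B^G_{\omega''}$, which is (\ref{eqn:sgn3}).

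For (\ref{eqn:index3}) I would first record, via Proposition~\ref{prop:quotient}, that the trivial summand lies in every $\RR^n_{g+}$ and in no $i\RR^n_{g-}$, so $\pi^{-1}(\overline{\RR^n/G})=\RR^n_{\bf 1}\times\pi^{-1}(\overline{(\RR^n)''/G})$ exactly as in the proof of Proposition~\ref{prop:sgn''}; the same decomposition serves both $\omega$ and $\omega'$. By multiplicativity of the equivariant radial index over this product, and since $G$ acts trivially on $\RR^n_{\bf 1}$, the trivial block enters as multiplication by its ordinary index at the origin, namely ${\rm sgn}\,\det{\rm Hess}=-1$ for $\omega_{\bf 1}$ and $+1$ for $\omega_{\bf 1}'$. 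Applying the ring homomorphism $r^{(0)}$ then yields $r^{(0)}{\rm ind}^G(\omega;\pi^{-1}(\overline{\RR^n/G}),0)=-\,r^{(0)}{\rm ind}^G(\omega'';\pi^{-1}(\overline{(\RR^n)''/G}),0)$ and the same identity with $+$ for $\omega'$, which is (\ref{eqn:index3}). Finally, combining (\ref{eqn:sgn3}) and (\ref{eqn:index3}) reduces the asserted equality for $\omega$ to the same equality for $\omega'$; but $\omega'$ has a positive definite trivial block, so Proposition~\ref{prop:sgn''} reduces it further to the trivial‑representation‑free case treated in Section~\ref{sect:one-dimrep}, giving ${\rm sgn}\,B^G_{\omega'}=r^{(0)}{\rm ind}^G(\omega')$ and hence the stated conclusion.

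The main obstacle, as I see it, is the clean justification of the two factorizations — in particular the multiplicativity of the equivariant radial index on the \emph{singular} product $\RR^n_{\bf 1}\times\pi^{-1}(\overline{(\RR^n)''/G})$, and the claim that the smooth factor with trivial $G$‑action contributes precisely its integer Morse index as a scalar in $A(G)$. The parallel K\"unneth statement for the residue pairing, $\Omega_\omega\cong\Omega_{\omega_{\bf 1}}\otimes\Omega_{\omega''}$ with $B_\omega=B_{\omega_{\bf 1}}\otimes B_{\omega''}$, is standard but should be checked to respect the $G$‑action; once these structural facts are in place, the actual sign bookkeeping (one Jacobian sign, one Hessian determinant) is routine.
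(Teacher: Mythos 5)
Your treatment of (\ref{eqn:sgn3}) is fine and agrees with the paper, which simply declares that identity obvious: the tensor decomposition of $\Omega_\omega$ with a one-dimensional, $G$-trivial factor of signature $\pm1$ is the standard reason. The problem is (\ref{eqn:index3}). The step you yourself flag as ``the main obstacle'' --- multiplicativity of the equivariant radial index over $\RR^n_{\bf 1}\times\pi^{-1}(\overline{(\RR^n)''/G})$, with the smooth $G$-trivial factor entering as the scalar ${\rm sgn}\det{\rm Hess}=-1$ --- is not a structural fact that merely needs to be ``checked''. Proposition~\ref{prop:sgn''} is exactly the $+1$ case of your claim, and it is proved there by radiality of the positive definite block, not by any product formula; given that, your $-1$ case is equivalent to the identity (\ref{eqn:index3}) you are trying to establish. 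The radial index is defined via deformations adapted to a stratification, and no K\"unneth formula for it on a product of a smooth space with a singular one is available in the paper or its references; establishing one (say by an equivariant real Sebastiani--Thom computation of the Euler characteristic of the negative Milnor fibre) would be a genuine piece of work, not routine sign bookkeeping. As written, the argument for (\ref{eqn:index3}) is therefore circular at its central step.

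The paper avoids any product formula. It considers the one-parameter family $\omega_{(t)}=d(x_1^3-tx_1+\sum_{i>1}x_i^2)\oplus\bigoplus_{\alpha\neq{\bf 1}}\omega_\alpha$ and applies the law of conservation of number (Corollary~\ref{cor:law-index}). The critical points of $x_1^3-tx_1$ are real for one sign of $t$ and purely imaginary for the other; since the trivial summand $\RR^n_{\bf 1}$ occurs only in real form inside $\pi^{-1}(\overline{\RR^n/G})$ (it is never a $(-1)$-eigenspace of a group element), for one sign of $t$ the deformed form has no singular points on the preimage at all, so the index of $\omega_{(0)}$ is $0$, while for the other sign it has exactly two nondegenerate singular points, of local types $\omega$ and $\omega'$. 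Conservation of number then yields ${\rm ind}^G(\omega;\pi^{-1}(\overline{\RR^n/G}),0)+{\rm ind}^G(\omega';\pi^{-1}(\overline{\RR^n/G}),0)=0$ using only tools already established. To repair your argument you should either prove the product formula you invoke (nontrivial) or replace it by a morsification argument of this kind.
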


\begin{proof}
Equation (\ref{eqn:sgn3}) is obvious. 

To prove that
\[
{\rm ind}^G(\omega; \pi^{-1}(\overline{\RR^n/G}),0) =
-{\rm ind}^G(\omega'; \pi^{-1}(\overline{\RR^n/G}),0) \, , 
\]
let us consider the family $\omega_{(t)}$ of 1-forms defined by 
\[
\omega_{(t)} = d(x_1^3-tx_1+ \sum_{i>1} x_i^2) \oplus \bigoplus_{\alpha \neq {\bf 1}} \omega_\alpha \, .
\]
For $t>0$, the 1-form $\omega_t$ has no singular points on $\pi^{-1}(\overline{\RR^n/G})$. For $t<0$ it has
one (non-degenerate) singular point of type $\omega$ and one of type $\omega'$. In both cases
the equivariant indices of the singular points sum up to the equivariant index of $\omega_{(0)}$.
\end{proof}

\section{The case of one-dimensional non-trivial representations} \label{sect:one-dimrep}
Propositions~\ref{prop:sgn'}, \ref{prop:sgn''}, and \ref{prop:sgn3} permit to verify the statement of
Theorem~\ref{thm:main} for the space $\RR^n$ with only non-trivial one-dimensional representations and 
\begin{equation} \label{eqn:omega}
\omega =d(\sum \pm x_i^2).
\end{equation}

The first idea would be to compute both sides of Equation~(\ref{eqn:main}) for the forms of the described
type and to compare the results. The left hand side (the signature) is easily computed: it is equal to 0
if the representation of $G$ is not in ${\rm SL}(n; \RR)$ and to $\pm 1= {\rm ind} (\omega,\RR^n,0)$ if the
representation is inside ${\rm SL}(n; \RR)$. The problem is that the computation of the right hand side
(namely of ${\rm ind}^G$: we do not know how it is possible to compute $r^{(0)}{\rm ind}^G$ directly) is
not complicated for a particular case, however it leads to rather involved combinatorial relations for
certain functions on the set of subgroups which we could not understand.

The difficulty to compute ${\rm ind}^G(\omega; \pi^{-1}(\overline{\RR^n/G}),0)$ is related to the fact
that the function $f$ such that $\omega=df$ takes both positive and negative values on
$\pi^{-1}(\overline{\RR^n/G})$. If $\omega=df$ where $f(x)>0$ for
$x \in \pi^{-1}(\overline{\RR^n/G}) \setminus \{ 0 \}$, then
${\rm ind}^G(\omega; \pi^{-1}(\overline{\RR^n/G}),0)=1$. 

\begin{proposition} \label{prop:four}
Let $\omega_0:=d( \sum x_i^4)$. Then one has
\[
{\rm sgn} \, B^G_{\omega_0} = r^{(0)}{\rm ind}^G(\omega_0; \pi^{-1}(\overline{\RR^n/G}),0)=1 \, .
\]
\end{proposition}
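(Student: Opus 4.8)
The plan is to establish both equalities by exhibiting a 1-form whose potential is positive-definite on the relevant preimage, so that the index computation collapses to the trivial case already noted in the text.

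First I would treat the right-hand side, which is essentially immediate. Since we are in the reduced situation where $\RR^n$ decomposes into non-trivial one-dimensional representations, I would choose coordinates $x_1, \ldots, x_n$ each transforming under a character $\alpha_i : G \to \{\pm 1\}$; then the subspaces $\RR^n_{g\pm}$ of Proposition~\ref{prop:quotient} are spanned by the $e_i$ with $\alpha_i(g) = \pm 1$, so on each stratum $\RR^n_{g+} \oplus i\RR^n_{g-}$ every coordinate of a point is either real or purely imaginary. Evaluating $f = \sum x_i^4$ there gives $x_i^4 \geq 0$ in the real case and $(it_i)^4 = t_i^4 \geq 0$ in the imaginary case, so $f > 0$ on $\pi^{-1}(\overline{\RR^n/G}) \setminus \{0\}$. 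The observation recorded just before the proposition then gives ${\rm ind}^G(\omega_0; \pi^{-1}(\overline{\RR^n/G}),0) = 1$ in $A(G)$, whence $r^{(0)}{\rm ind}^G(\omega_0; \pi^{-1}(\overline{\RR^n/G}),0) = 1$.

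For the left-hand side I would identify $\Omega_{\omega_0}$ with the Milnor algebra: writing $\omega_0 = \sum 4x_i^3\, dx_i$, one has $\omega_0 \wedge \Omega^{n-1}_{\RR^n,0} = (x_1^3, \ldots, x_n^3)\, d\xx$, so $\Omega_{\omega_0} \cong \calE_{\RR^n,0}/(x_1^3, \ldots, x_n^3)$ via multiplication by $d\xx$. This is finite-dimensional with the monomial basis $x^a\, d\xx$, $a \in \{0,1,2\}^n$. Because $g$ sends $x^a$ to $\big(\prod_i \alpha_i(g)^{a_i}\big)x^a$ and $d\xx$ to $\big(\prod_i \alpha_i(g)\big)d\xx$, the invariant monomials are exactly those with $\prod_i \alpha_i^{a_i+1} = {\bf 1}$.

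I would then compute the residue pairing, which factors over the coordinates: $B_{\omega_0}(x^a d\xx, x^b d\xx) = 4^{-n}$ if $a_i + b_i = 2$ for all $i$, and $0$ otherwise. Thus the Gram matrix is $4^{-n}$ times the permutation matrix of the Grothendieck duality involution $a \mapsto a^* := (2,\ldots,2) - a$. Using $\alpha_i^4 = {\bf 1}$ one checks that $\prod_i \alpha_i^{a_i+1} = {\bf 1}$ forces $\prod_i \alpha_i^{a^*_i+1} = {\bf 1}$, so the involution restricts to the invariant basis. For such a form the signature equals the number of fixed points of $a \mapsto a^*$ among the invariant exponents: each $2$-cycle $\{a, a^*\}$ contributes a cancelling pair of eigenvalues $\pm 4^{-n}$, while each fixed point contributes $+4^{-n}$. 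The unique fixed point is $a = (1,\ldots,1)$, and it is always invariant since $\prod_i \alpha_i^2 = {\bf 1}$ automatically; hence ${\rm sgn}\, B^G_{\omega_0} = 1$, matching the right-hand side.

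The individual computations are routine, so the only real care is bookkeeping: adapting coordinates to the character decomposition (which simultaneously diagonalizes $f$ and $\omega_0$ and makes the strata $\RR^n_{g\pm}$ coordinate subspaces), correctly incorporating the contribution of $d\xx$ into the $G$-action on $\Omega_{\omega_0}$, and verifying that the duality $a \mapsto a^*$ is compatible with $G$-invariance. The conceptual point that makes everything collapse is that the residue pairing is anti-diagonal, so the signature is a fixed-point count; once that is in place both sides reduce to the single self-dual invariant form $x_1\cdots x_n\, d\xx$ and yield the common value $1$.
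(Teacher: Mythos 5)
Your proof is correct and follows essentially the same route as the paper's: positivity of $\sum x_i^4$ on $\pi^{-1}(\overline{\RR^n/G})\setminus\{0\}$ gives the index, and the anti-diagonal structure of the residue pairing on the monomial basis of $\Omega_{\omega_0}$ --- hyperbolic pairs $\{\underline{k},\underline{2}-\underline{k}\}$ cancelling, with $x_1\cdots x_n\,d\xx$ the unique self-dual and automatically $G$-invariant element --- gives the signature. The only cosmetic difference is that you compute the Gram matrix by a direct iterated one-variable residue, whereas the paper invokes the standard characterization of the pairing via a linear functional positive on the Jacobian; both yield the same conclusion.
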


\begin{proof}
The equation
\[
{\rm ind}^G(\omega_0; \pi^{-1}(\overline{\RR^n/G}),0)=1
\]
follows from the fact that the function $\sum x_i^4$ is positive on
$\pi^{-1}(\overline{\RR^n/G}) \setminus \{ 0 \}$ (and therefore $\omega_0$ is radial on
$\pi^{-1}(\overline{\RR^n/G})$).

A basis of the space $\Omega_{\omega_0}$ consists of the $n$-forms
$\underline{x}^{\underline{k}}d\xx$, where
$\underline{x}^{\underline{k}}:=x_1^{k_1} \cdots x_n^{k_n}$ with $0 \leq k_i \leq 2$. The Jacobian
$\calJ_{\omega_0}$ of $\omega_0$ is $2^n\prod_{i=1}^n x_i^2$. It is known that the pairing $B_{\omega_0}$
(at least up to an automorphism of $\Omega_{\omega_0}$ defined by the multiplication by an invertible function)
can be computed in the following way. Let $\ell$ be a $G$-invariant linear function on $\Omega_{\omega_0}$
such that $\ell(\calJ_{\omega_0}) >0$. Then 
\[
B_{\omega_0}(\varphi d\xx, \psi d\xx)=\ell(\varphi, \psi) \, .
\]
Let us take the function $\ell$ to be equal to zero on all the elements of the basis except
$\underline{x}^{\underline{2}}d\xx$ ($\underline{2}=(2, \ldots , 2)$) where it is equal to 1.
We have to consider the pairing on the subspace generated by $G$-invariant basis (monomial) $n$-forms.
The $n$-form
$\underline{x}^{\underline{1}}d\xx$ ($\underline{1}=(1, \ldots , 1)$) is $G$-invariant and one has
$B_{\omega_0}(\underline{x}^{\underline{1}} d\xx, \underline{x}^{\underline{1}} d\xx)=1$.
If a basis element $\underline{x}^{\underline{k}}d\xx$, $\underline{k} \neq \underline{1}$, is
$G$-invariant, then the element $\underline{x}^{\underline{2}-\underline{k}}d\xx$ is $G$-invariant
as well and we have 
\[ 
B_{\omega_0}(\underline{x}^{\underline{k}} d\xx, \underline{x}^{\underline{k}} d\xx) =
B_{\omega_0}(\underline{x}^{\underline{2}-\underline{k}} d\xx,
\underline{x}^{\underline{2}-\underline{k}} d\xx)=0,
\quad B_{\omega_0}(\underline{x}^{\underline{k}} d\xx,
\underline{x}^{\underline{2}-\underline{k}} d\xx)=1 \, .
\]
Therefore ${\rm sgn} \, B^G_{\omega_0}=1$.
\end{proof}

Now we can finish the poof of Theorem~\ref{thm:main}. If $G=\{ e\}$ (the trivial group) Theorem~\ref{thm:main}
is just the Eisenbud--Levine--Khimshiashvili theorem. Assume that Theorem~\ref{thm:main} is proved for all
groups of order smaller than the order of $G$. Let $\omega$ be as in (\ref{eqn:omega}).

\begin{proposition} \label{prop:endproof}
Under the assumptions above, one has
\[
{\rm sgn}\, B_\omega^G = r^{(0)}{\rm ind}^G(\omega; \pi^{-1}(\overline{\RR^n/G}),0) \, .
\]
\end{proposition}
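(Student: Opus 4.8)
The plan is to induct on the order of $G$ and use the two laws of conservation of number (Theorem~\ref{thm:law-sgn} and Corollary~\ref{cor:law-index}) to transport the already-established equality for $\omega_0=d(\sum x_i^4)$ of Proposition~\ref{prop:four} to the form $\omega=d(\sum\pm x_i^2)$. The base case $G=\{e\}$ is the Eisenbud--Levine--Khimshiashvili theorem, so I assume Theorem~\ref{thm:main} for every group of order $<|G|$. The engine is a single $G$-invariant deformation
\[
f_{(s)}=\sum_i x_i^4 + s\sum_i \eps_i x_i^2,\qquad \widetilde{\omega}_0:=df_{(s)}\qquad(\eps_i=\pm1),
\]
for small $s>0$. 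It is genuinely $G$-invariant because, since we have reduced to the case in which $\RR^n$ carries only non-trivial one-dimensional representations, each coordinate transforms by a character with values $\pm1$, so both $\sum x_i^4$ and $\sum\eps_i x_i^2$ are invariant. As $s\to0$ this is a real $G$-invariant deformation of $\omega_0$, so both conservation laws apply to it.

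First I would analyse the singular set of $\widetilde{\omega}_0$ on $\pi^{-1}(\overline{\RR^n/G})$. At the origin the Hessian of $f_{(s)}$ is $2s\,{\rm diag}(\eps_1,\dots,\eps_n)$, so the germ of $\widetilde{\omega}_0$ there is exactly that of $\omega=d(\sum\eps_i x_i^2)$, with isotropy group all of $G$. Writing $\partial_i f_{(s)}=2x_i(2x_i^2+s\eps_i)$, every other singular point $p$ (on any branch $\RR^n_{g+}\oplus i\RR^n_{g-}$) has some coordinate $x_j\neq0$; the corresponding character $\alpha_j$ is non-trivial, hence $G_p\subseteq\ker\alpha_j\subsetneq G$. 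Thus all singular points other than the origin have isotropy strictly smaller than $G$.

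Now I would feed this into the two conservation laws. By Proposition~\ref{prop:four}, both $r^{(0)}{\rm ind}^G(\omega_0;\pi^{-1}(\overline{\RR^n/G}),0)$ and ${\rm sgn}\,B^G_{\omega_0}$ equal $1$. Corollary~\ref{cor:law-index} and Theorem~\ref{thm:law-sgn} express each of these values as the contribution of the origin plus a sum over the orbits $[p]\neq0$. The origin's contributions are precisely $r^{(0)}{\rm ind}^G(\omega;\pi^{-1}(\overline{\RR^n/G}),0)$ and ${\rm sgn}\,B^G_\omega$. For each orbit $[p]$ with $p\neq0$ one has $|G_p|<|G|$, so the induction hypothesis (Theorem~\ref{thm:main} for $G_p$) gives ${\rm sgn}\,B^{G_p}_{\widetilde{\omega}_0,p}=r^{(0)}{\rm ind}^{G_p}(\widetilde{\omega}_0;\pi^{-1}(\overline{\RR^n/G}),p)$. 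Subtracting the two identities and cancelling the matched $p\neq0$ terms leaves exactly ${\rm sgn}\,B^G_\omega=r^{(0)}{\rm ind}^G(\omega;\pi^{-1}(\overline{\RR^n/G}),0)$, as required.

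The step I expect to be the main obstacle is the invocation of the induction hypothesis at a point $p\neq0$: one must verify that the germ at $p$ of the triple (the $1$-form $\widetilde{\omega}_0$, the set $\pi^{-1}(\overline{\RR^n/G})$, and the $G_p$-action) really is an instance of the setup of Theorem~\ref{thm:main} for the smaller group $G_p$. Concretely, after translating $p$ to the origin and passing to the real coordinates $ix_1,\dots,ix_k,x_{k+1},\dots,x_n$ of the type used in the proof of Theorem~\ref{thm:law-sgn}, the branches of $\pi^{-1}(\overline{\RR^n/G})$ through $p$ (those $\RR^n_{g+}\oplus i\RR^n_{g-}$ with $gp=\overline{p}$, forming a single coset of $G_p$) must be identified with $\pi_{G_p}^{-1}(\overline{V/G_p})$ for the appropriate real $G_p$-representation $V$; only then does the right-hand side of the theorem for $G_p$ coincide with the local term $r^{(0)}{\rm ind}^{G_p}(\widetilde{\omega}_0;\pi^{-1}(\overline{\RR^n/G}),p)$ appearing in Corollary~\ref{cor:law-index}. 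This bookkeeping of which branches pass through $p$ and which coordinates become imaginary is the delicate part; the remaining cancellation is formal.
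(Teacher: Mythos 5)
Your proof is correct and is essentially the paper's own argument: the authors use exactly the same deformation $\omega_{(t)}=\omega_0+t\,\omega$ of $d(\sum x_i^4)$ and the same pairing of Theorem~\ref{thm:law-sgn} with Corollary~\ref{cor:law-index}, matching the origin's contributions and cancelling the $p\neq 0$ terms via the induction hypothesis. Your explicit check that every singular point $p\neq 0$ has isotropy group contained in the kernel of a non-trivial character, hence strictly smaller than $G$, is a detail the paper leaves implicit, and is a welcome addition.
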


\begin{proof}
Let us consider the family $\omega_{(t)}=\omega_0 +t\omega$. For $t>0$ the 1-form $\omega_{(t)}$
has a non-degenerate singular point at the origin of type $\omega$. Theorem~\ref{thm:law-sgn} gives 
\begin{equation}
{\rm sgn} \, B^G_{\omega_0} =
{\rm sgn}\, B_\omega^G + \sum_{p \neq 0} {\rm sgn}\, B_{\omega_{(t)},p}^{G_p} \label{eqn:sgnsplit} 
\end{equation}
and 
\begin{eqnarray}
\lefteqn{r^{(0)}{\rm ind}^G(\omega_0; \pi^{-1}(\overline{\RR^n/G}),0)} \nonumber \\
 & = & r^{(0)}{\rm ind}^G(\omega; \pi^{-1}(\overline{\RR^n/G}),0) +
 \sum_{p \neq 0} r^{(0)}{\rm ind}^{G_p}(\omega_{(t)}; \pi^{-1}(\overline{\RR^n/G}),p)\,. \label{eqn:indsplit}
\end{eqnarray}
By Proposition~\ref{prop:four}, the left hand sides of (\ref{eqn:sgnsplit}) and (\ref{eqn:indsplit})
are equal to each other. By the assumption
\[
{\rm sgn}\, B_{\omega_{(t)},p}^{G_p} = 
r^{(0)}{\rm ind}^{G_p}(\omega_{(t)}; \pi^{-1}(\overline{\RR^n/G}),p) \, .
\]
Therefore 
\[
{\rm sgn} \, B^G_{\omega} = r^{(0)}{\rm ind}^G(\omega; \pi^{-1}(\overline{\RR^n/G}),0) \, .
\]
\end{proof}

\section{Quantum cohomology group and pairings} \label{sect:quantum}
Let $(\CC^n,0)$ be endowed with an action of a finite abelian group $G$. Without loss of generality
we can assume that the action is linear, that is, it is induced by a representation of $G$.
Let $f:(\CC^n,0) \to (\CC,0)$ be a germ of a $G$-invariant holomorphic function with an isolated
critical point at the origin. For $g \in G$, let $(\CC^n)^g$ be the fixed point set (a vector subspace)
$\{ x \in \CC^n \, | \, gx=x \}$ and let $n_g$ be the dimension of $(\CC^n)^g$. The restriction of $f$
to $(\CC^n)^g$ will be denoted by $f^g$. The germ $f^g: ((\CC^n)^g,0) \to (\CC,0)$ has an isolated critical
point at the origin. Let $M_{f^g}:=M_{f^g,\eps}$ ($\eps$ small enough) be the Milnor fibre of the germ $f^g$.
The group $G$ acts on $M_{f^g}$ and thus on its homology and cohomology groups. 

\begin{definition} (cf.\ \cite{FJR})
The {\em quantum cohomology group} of the pair $(f,G)$ is 
\[
\calH_{f,G}= \bigoplus_{g\in G} \calH_g\,,
\]
where $\calH_g:=H^{n_g-1}(M_{f^g};\CC)^G= H^{n_g-1}(M_{f^g}/G;\CC)$ is the $G$-invariant part of the
vanishing cohomology group $H^{n_g-1}(M_{f^g};\CC)$ of the Milnor fibre of $f^g$. If $n_g=1$, this means
the cohomology group  $\widetilde{H}^0(M_{f^g};\CC)$ reduced modulo a point. If $n_g=0$, one assumes
$H^{-1}(M_{f^g};\CC)$ to be one dimensional  with the trivial action of $G$. (This means that the
``critical point'' of the function of zero variables is considered as a non-degenerate one and thus has
Milnor number equal to one.)
\end{definition}

\begin{remark}
 In~\cite{FJR} the space $\calH_g$ is defined as
 \[
 \calH_g:=H^{n_g}(B_{\delta}^{2n}, M_{f^g}; \CC)^G.
 \]
 However this space
 is canonically isomorphic to $H^{n_g-1}(M_{f^g};\CC)^G$ with the conventions for $n_g=0,1$ in the
 definition above.
 \end{remark}
 
 Let $\Omega^\CC_{f^g} := \Omega^{n_g}_{\CC^{n_g},0}/df^g \wedge \Omega^{n_g-1}_{\CC^{n_g},0}$.
 One has a canonical isomorphism between $\Omega^\CC_{f^g}$ and $H^{n_g-1}(M_{f^g};\CC)$ (for $n_g=0,1$
 as well). (A differential $n_g$-form $\eta$ on $(\CC^{n_g},0)$ gives a well-defined $(n_g-1)$-form
 $\frac{\eta}{df^g}$ on $M_{f^g}$ which corresponds to an element of $H^{n_g-1}(M_{f^g};\CC)$.) This
 isomorphism respects the action of the group $G$ on both spaces and thus defines an isomorphism between
 $(\Omega^{\CC}_{f^g})^G$ and $H^{n_g-1}(M_{f^g};\CC)^G$.
 One also has the residue pairing $B_{f^g}^\CC := B_{df^g}^\CC$ on the space $\Omega^\CC_{f^g}$.
 
 Assume that the function $f$ is quasihomogeneous, that is, there exist positive integers (the {\em weights})
 $w_1, \ldots , w_n$ and $d$ (the {\em quasidegree}) such that 
 \[
 f(\lambda^{w_1}x_1, \ldots , \lambda^{w_n} x_n)=\lambda^df(x_1, \ldots , x_n) \, .
 \]
 The classical monodromy transformation of $f$ is a map $\phi_f$ from the Milnor fibre $M_f=M_{f,\eps}$ to
 itself induced by rotating the value $\eps$ around the the origin in $\CC$ counterclockwise. For a
 quasihomogeneous $f$, the monodromy transformation can be chosen as the restriction to the Milnor fibre
 of the (linear) transformation
\[
J_f(x_1, \ldots , x_n) = (\exp(2\pi i \cdot w_1/d)x_1, \ldots , \exp(2\pi i \cdot w_n/d)x_n) \, .
\]
The transformation $J:=J_f$ preserves the function $f$. Assume that $J \in G$. (This is the condition on
the (abelian) group $G$ to be {\em admissible}: see \cite[Proposition~2.3.5]{FJR}.) In this situation one
also has the following pairing on the space $H^{n-1}(M_f; \ZZ) \cong H^n(\CC^n, M_f; \ZZ)$. Let
$M_f=M_{f+}:=f^{-1}(\eps)$ and $M_{f-}:=f^{-1}(-\eps)$. One has a well-defined pairing
$\langle \ \circ \ \rangle: H_n(\CC^n, M_{f+}; \ZZ) \otimes H_n(\CC^n, M_{f-}; \ZZ) \to \ZZ$
defined by the ''intersection number''. Let $I=\sqrt{J}$ be the linear transformation
\[
I(x_1, \ldots , x_n) = (\exp(\pi i \cdot w_1/d)x_1, \ldots , \exp(\pi i \cdot w_n/d)x_n) \, .
\]
The transformation $I$ sends $M_{f+}$ to $M_{f-}$ and thus defines a map 
\[ 
I_\ast: H_n(\CC^n, M_{f+}; \ZZ)  \to H_n(\CC^n, M_{f-}; \ZZ) \, . 
\]
For $A,B \in H_n(\CC^n, M_f; \ZZ)$ let
\[ 
\langle A,B \rangle := \langle A \circ I_\ast B \rangle \, .
\]
This is a (non-degenerate) pairing (neither symmetric, nor skew-symmetric) on $H_n(\CC^n, M_f; \ZZ)$.
It defines a pairing on the dual space $H^n(\CC^n,M_f; \ZZ)$ and thus on its subspace
$H^n(\CC^n,M_f; \ZZ)^G \cong H^{n-1}(M_f; \ZZ)^G$.

In \cite{FJR} this pairing is identified with the residue pairing on
$(\Omega^{\CC}_f)^G \cong H^{n-1}(M_f; \CC)^G$. However this seems not to be the case.
The residue pairing is symmetric, whereas the pairing $\langle \ , \ \rangle$ on $H^{n-1}(M_f; \CC)^G$
described above is either symmetric or skew-symmetric depending on the number of variables. Indeed,
\[
\langle A, BÊ\rangle := \langle A \circ I_\ast B \rangle =
\langle I_\ast A \circ I^2_\ast B \rangle = \langle I_\ast A \circ J_\ast B \rangle \, .
\]
If the (relative) cycle $B$ is $J_\ast$-invariant (this is the case when $J \in G$), then one has
\[
\langle A, BÊ\rangle = \langle I_\ast A \circ  B \rangle = (-1)^n \langle B \circ I_\ast A \rangle =
(-1)^n \langle B, AÊ\rangle \, .
\]
Thus the pairing $\langle \ ,  \  \rangle$ is symmetric  if $n$ is even and skew-symmetric if $n$ is odd.

\section{Orbifold index and quantum cohomology group} \label{sect:orbifold}
Let $(V,0) \subset \CC^N$ be a germ of a complex analytic $n$-dimensional variety with an action of a finite
group $G$ on it. In Section~\ref{sect:equivariant}, we defined the orbifold index
${\rm ind}^{\rm orb}(\omega; X,0)$ for
a $G$-invariant 1-form on $(X,0)$, where $(X,0)$ is a germ of a (real) subanalytic space and
$\omega$ has an isolated singular point at the origin. It can be defined in the same way in the case of
a complex valued 1-form on $(V,0)$ with an isolated singular point at the origin. 

From \cite[Theorem~2]{GZM} one can derive the following result.

\begin{theorem} \label{thm:ind=dim}
One has
\begin{equation*} \label{eqn:ind=dim}
{\rm ind}(\omega; \CC^n/G, 0) = \dim (\Omega_{\omega}^{\CC})^G \, .
\end{equation*}
\end{theorem}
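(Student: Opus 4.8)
The plan is to bridge the two sides of the equation through the equivariant index ${\rm ind}^G(\omega;\CC^n,0) \in A(G)$, using the cited computation \cite[Theorem~2]{GZM} of its image in the representation ring. Throughout I regard $\omega$ as a $G$-invariant 1-form on $(\CC^n,0)$; being invariant, it descends to the 1-form on the quotient whose index is the left-hand side, and its module $\Omega^\CC_\omega$ carries the natural $G$-action used on the right-hand side. The two facts I would chain together are the representation-theoretic content of \cite{GZM} and the law for the index of a quotient recorded in Proposition~\ref{prop:indquot}.

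The first step is the elementary observation that
\[
r^{(0)}(a) = \dim\big(r(a)\big)^G \qquad \text{for every } a \in A(G).
\]
Both sides are additive in $a$, so it suffices to verify this on a generator $[G/H]$: there $r([G/H])$ is the permutation representation on the coset space $G/H$, whose space of invariants is spanned by the single constant function and hence has dimension $1 = r^{(0)}[G/H]$. Applying this identity to $a={\rm ind}^G(\omega;\CC^n,0)$ and invoking \cite[Theorem~2]{GZM}, which asserts $r({\rm ind}^G(\omega;\CC^n,0)) = [\Omega^\CC_\omega]$ in $R_\CC(G)$, gives
\[
\dim\big(\Omega^\CC_\omega\big)^G = \dim\big(r({\rm ind}^G(\omega;\CC^n,0))\big)^G = r^{(0)}{\rm ind}^G(\omega;\CC^n,0).
\]

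It remains to identify the last quantity with ${\rm ind}(\omega;\CC^n/G,0)$, which is exactly the content of Proposition~\ref{prop:indquot} (with $V=\CC^n$ and $\eta$ the 1-form on the quotient pulling back to $\omega$); chaining the two displays then proves the theorem. The main point to check, and the only step carrying genuine content beyond \cite{GZM} and the formal arithmetic of $r^{(0)}$, is that Proposition~\ref{prop:indquot} — stated there for real subanalytic spaces via the Euler characteristics $\chi^{(0)}$ and $\chi^{(1)}$ — transfers verbatim to complex-valued 1-forms on complex varieties. Concretely, I would verify that the isotropy-stratum decomposition of \cite[Proposition~4.9]{EG-EJM} and the additivity of the index over the strata $V^{([H])}/G$ remain valid in the complex-analytic setting; the representation-theoretic identity of the first step and the passage to invariants are purely formal and need no further argument.
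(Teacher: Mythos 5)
Your proposal is correct and follows essentially the same route as the paper: the paper's proof is a one-line citation of \cite[Theorem~2]{GZM} (``the $G$-invariant part corresponds to the trivial representation''), and the chain you spell out --- $r({\rm ind}^G(\omega;\CC^n,0))=[\Omega^\CC_\omega]$, the formal identity $r^{(0)}(a)=\dim(r(a))^G$, and Proposition~\ref{prop:indquot} applied on the quotient --- is exactly the argument the authors already record in Section~\ref{sect:realquot} and compress into that citation. Your extra care about transferring Proposition~\ref{prop:indquot} to complex-valued forms is handled by the $(-1)^n$ comparison with the real part noted in the paper, so nothing is missing.
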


\begin{proof} This follows from \cite[Theorem~2]{GZM} since the $G$-invariant part corresponds to the trivial representation.
\end{proof}

Let $(\CC^n,0)$ be endowed with a linear action of a finite abelian group $G$ and let
$f:(\CC^n,0) \to (\CC,0)$ be a germ of a $G$-invariant holomorphic function with an isolated critical point
at the origin as in Sect.~\ref{sect:quantum}. Let $\check{f}$ be the corresponding function on $(\CC^n/G,0)$
($f=\check{f}\circ\pi$, where $\pi$ is the quotient map $\CC^n\to\CC^n/G$) and let
\[
Q^\CC_{f,G} := \bigoplus_{g \in G} (\Omega_{f^g}^{\CC})^G \, .
\]

\begin{definition}
The {\em orbifold dimension} of $Q^\CC_{f,G}$ is
\[
\dim^{\rm orb} Q^\CC_{f,G} := \sum_{g \in G} (-1)^{n_g} \dim (\Omega_{f^g}^{\CC})^G \, .
\]
\end{definition}

\begin{theorem} \label{thm:complexQ}
One has 
\[
\dim^{\rm orb} Q^\CC_{f,G} = (-1)^n {\rm ind}^{\rm orb}(df; \CC^n,0)= - \overline{\chi}^{\rm orb}(M_f) \, ,
\]
where $\overline{\chi}^{\rm orb}(M_f)$ is the reduced orbifold Euler characteristic of the Milnor fibre of $f$.
\end{theorem}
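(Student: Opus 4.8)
The plan is to prove the two outer equalities separately, establishing that each of $\dim^{\rm orb} Q^\CC_{f,G}$ and $(-1)^n {\rm ind}^{\rm orb}(df;\CC^n,0)$ is equal to $-\overline{\chi}^{\rm orb}(M_f)$. The first of these I would obtain from a direct computation of the orbifold Euler characteristic of the Milnor fibre in terms of the summands $\calH_g$; the second I would obtain from a complex equivariant version of the Khimshiashvili formula, built from the real version \cite[Proposition~4.11]{EG-EJM} already used in Section~\ref{sect:equivariant}.

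For the first equality, the key observation is that the fixed point set of $g \in G$ on the Milnor fibre is $(M_f)^g = M_{f^g}$. Since $G$ is abelian, the orbifold Euler characteristic is $\chi^{\rm orb}(M_f,G)=\sum_{g\in G}\chi(M_{f^g}/G)$ (see \cite{AS, HH}). Using $H^\ast(M_{f^g}/G;\CC)=H^\ast(M_{f^g};\CC)^G$, the bouquet-of-spheres structure of $M_{f^g}$, and the isomorphism $\calH_g\cong (\Omega_{f^g}^{\CC})^G$, I would compute $\chi(M_{f^g}/G)=1+(-1)^{n_g-1}\dim (\Omega_{f^g}^{\CC})^G$, checking the degenerate cases $n_g=0,1$ against the conventions in the definition of $\calH_g$. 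Summing over $g$ gives $\chi^{\rm orb}(M_f,G)=|G|+\sum_{g}(-1)^{n_g-1}\dim (\Omega_{f^g}^{\CC})^G$. Since the reduced orbifold Euler characteristic is $\overline{\chi}^{\rm orb}(M_f)=r^{(1)}(\chi^G(M_f)-[G/G])=\chi^{\rm orb}(M_f,G)-|G|$ (because $r^{(1)}[G/G]=|G|$), this yields $\overline{\chi}^{\rm orb}(M_f)=-\sum_{g}(-1)^{n_g}\dim (\Omega_{f^g}^{\CC})^G=-\dim^{\rm orb}Q^\CC_{f,G}$.

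For the second equality I would first prove the identity ${\rm ind}^G(df;\CC^n,0)=(-1)^{n+1}\overline{\chi}^G(M_f)$ in the Burnside ring $A(G)$. Here I would use that $df$ is a complex-valued $1$-form whose real part is $d({\rm Re}\,f)$, so that its equivariant index is $(-1)^n$ times that of $d({\rm Re}\,f)$ on $\RR^{2n}$ \cite{EGS}; applying \cite[Proposition~4.11]{EG-EJM} to the real $G$-invariant function ${\rm Re}\,f$ on $\RR^{2n}$ gives ${\rm ind}^G(d({\rm Re}\,f);\RR^{2n},0)=-\overline{\chi}^G(M^-_{{\rm Re}\,f})$, and I would identify the negative real Milnor fibre $M^-_{{\rm Re}\,f}$ $G$-equivariantly with the complex Milnor fibre $M_f$. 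Applying $r^{(1)}$ then gives ${\rm ind}^{\rm orb}(df;\CC^n,0)=(-1)^{n+1}\overline{\chi}^{\rm orb}(M_f)$, so that $(-1)^n{\rm ind}^{\rm orb}(df;\CC^n,0)=-\overline{\chi}^{\rm orb}(M_f)$. Combined with the first equality this proves the theorem.

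The main obstacle is the $G$-equivariant homotopy equivalence $M^-_{{\rm Re}\,f}\simeq_G M_f$ between the negative real Milnor fibre of ${\rm Re}\,f$ and the complex Milnor fibre of $f$: one must check that the standard identification of these spaces (via the complex Milnor fibration restricted over the negative real axis) can be carried out compatibly with the $G$-action. A secondary technical point is the careful treatment of the low-dimensional cases $n_g\in\{0,1\}$ in the first equality, where the Euler-characteristic computation must be matched precisely with the $H^{-1}$ and reduced-$\widetilde H^0$ conventions built into the definition of the summands $\calH_g$.
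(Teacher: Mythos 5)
Your argument is correct, but it takes a genuinely different route from the paper's, most notably for the first equality. The paper proves $\dim^{\rm orb} Q^\CC_{f,G} = (-1)^n{\rm ind}^{\rm orb}(df;\CC^n,0)$ algebraically: it writes ${\rm ind}^{\rm orb}({\rm Re}\,df;\CC^n,0)=\sum_{g}{\rm ind}({\rm Re}\,d\check{f}^g;(\CC^n)^g/G,0)$, converts each summand to the index of the holomorphic form $d\check{f}^g$ at the cost of a factor $(-1)^{n_g}$, and then applies Theorem~\ref{thm:ind=dim} (the result of \cite{GZM}, ${\rm ind}(\omega;\CC^n/G,0)=\dim(\Omega^\CC_\omega)^G$) stratum by stratum; the second equality is then obtained by quoting ${\rm ind}^G(df;\CC^n,0)=(-1)^{n-1}\overline{\chi}^G(M_f)$ from \cite[p.~297]{EG-EJM} and applying $r^{(1)}$. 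You instead show that both outer quantities equal $-\overline{\chi}^{\rm orb}(M_f)$: the first by a purely topological computation of $\chi^{\rm orb}(M_f,G)=\sum_g\chi(M_{f^g}/G)$ combined with the Gelfand--Leray identification $\calH_g\cong(\Omega^\CC_{f^g})^G$ (which the paper sets up independently in Section~\ref{sect:quantum}, so there is no circularity), and the second by rederiving the cited identity of \cite{EG-EJM} from its real form (Proposition~4.11 there) applied to ${\rm Re}\,f$ on $\RR^{2n}$. Your route buys independence from the algebraic index theorem of \cite{GZM}, at the price of the two points you correctly flag: the identification of $\{{\rm Re}\,f=-\eps\}\cap B_\delta$ with the complex Milnor fibre (true, via the $G$-invariant fibration of this set over the line ${\rm Re}\,w=-\eps$ with Milnor fibres as fibres, but it must be carried out at the level of $G$-homotopy type rather than merely of $G$-modules in cohomology, since $\chi^G$ lives in the Burnside ring $A(G)$ and is computed from the quotients of the isotropy strata), and the bookkeeping for $n_g\in\{0,1\}$, which does come out right against the paper's conventions (the formula $\chi(M_{f^g}/G)=1+(-1)^{n_g-1}\dim(\Omega^\CC_{f^g})^G$ holds in all cases, including $n_g=0$, where both sides vanish). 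Both proofs are valid; the paper's is shorter because it outsources more to the literature, while yours makes the topological content of the identity $\dim^{\rm orb}Q^\CC_{f,G}=-\overline{\chi}^{\rm orb}(M_f)$ explicit.
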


\begin{proof}
One has 
\begin{equation} \label{eqn:indorb} 
{\rm ind}^{\rm orb}({\rm Re}\, df; \CC^n,0) =
\sum_{g \in G} {\rm ind}({\rm Re}\,d\check{f}^g; (\CC^n)^g/G, 0) \,.
\end{equation}
For the index of the complex 1-form $df$ this implies (taking into account that the index of a complex valued 1-form differs by the sign $(-1)^n$ from the index of
its real part):
\[
(-1)^n {\rm ind}^{\rm orb}(df; \CC^n,0) = \sum_{g \in G} (-1)^{n_g} {\rm ind}(d\check{f}^g; (\CC^n)^g/G, 0) \, .
 \]
 By Theorem~\ref{thm:ind=dim} and the definition of the orbifold dimension of $Q^\CC_{f,G}$, we get
 the first equality. The second equality follows from the equality
 \[
 {\rm ind}^G(df; \CC^n, 0) = (-1)^{n-1}  \overline{\chi}^G(M_f)
 \]
(see \cite[p.~297]{EG-EJM}) by applying the homomorphism $r^{(1)}$ to both sides of the equation.
\end{proof}

Now let $f$ be real and let 
\[ 
Q_{f,G} := \bigoplus_{g \in G} \Omega_{f^g}^G, \quad \calB_{df}:=  \bigoplus_{g \in G}  B_{df^g}^G \, .
\]
Then Theorem~\ref{thm:main} implies the following corollary.

\begin{corollary}
One has
\[
{\rm sgn} \, \calB_{df} = {\rm ind}^{\rm orb}(df; \pi^{-1}(\overline{\RR^n/G}),0) \, .
\]
\end{corollary}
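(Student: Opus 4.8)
The plan is to prove the corollary summand by summand, exactly paralleling the derivation of Theorem~\ref{thm:complexQ} but with signatures in place of dimensions and Theorem~\ref{thm:main} in place of Theorem~\ref{thm:ind=dim}. By definition $\calB_{df}=\bigoplus_{g\in G}B^G_{df^g}$ is an orthogonal direct sum, so ${\rm sgn}\,\calB_{df}=\sum_{g\in G}{\rm sgn}\,B^G_{df^g}$. For each $g\in G$ the fixed point set $(\CC^n)^g$ is the complexification of the real vector space $(\RR^n)^g=\RR^n_{g+}$, since $G$ acts by real matrices; hence the restricted data $\big((\RR^n)^g,(\CC^n)^g,f^g\big)$ is precisely of the type to which Theorem~\ref{thm:main} applies. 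First I would apply equation~\eqref{eqn:main2} to $f^g$ on $(\RR^n)^g$ to obtain
\[
{\rm sgn}\,B^G_{df^g}={\rm ind}(d\check{f}^g;\overline{(\RR^n)^g/G},0),
\]
where $\check{f}^g$ denotes the push-forward of $f^g$ to $(\CC^n)^g/G$.

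Next I would expand the orbifold index on the other side. Set $V:=\pi^{-1}(\overline{\RR^n/G})$ and note that $df=\pi^*d\check{f}$ is a pull-back, because $f$ is $G$-invariant. The decomposition of $r^{(1)}{\rm ind}^G$ over group elements, which is the analogue of~\eqref{eqn:indorb} for the (singular) variety $V$, follows from \cite[Proposition~4.9]{EG-EJM} combined with $r^{(1)}[G/H]=|H|=\sum_{g\in H}1$, the partition $V^g=\bigsqcup_{[H]\ni g}V^{([H])}$ of the fixed point set, and additivity of the radial index over this partition. It gives
\[
{\rm ind}^{\rm orb}(df;V,0)=\sum_{g\in G}{\rm ind}(d\check{f}^g;V^g/G,0),
\]
where $V^g$ is the fixed point set of $g$ in $V$; by Proposition~\ref{prop:indquot} each summand equals $r^{(0)}{\rm ind}^G(df^g;V^g,0)$.

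The step tying the two expansions together, which I expect to be the main point requiring care, is the geometric identification $V^g/G=\overline{(\RR^n)^g/G}$, equivalently $V^g=\pi_g^{-1}(\overline{(\RR^n)^g/G})$ for the quotient map $\pi_g$ of $(\CC^n)^g$. Here I would compute $V\cap(\CC^n)^g$ directly from Proposition~\ref{prop:quotient}: a point $u+iv\in\RR^n_{h+}\oplus i\RR^n_{h-}$ lies in $(\CC^n)^g=(\RR^n_{g+})_\CC$ if and only if $u\in\RR^n_{h+}\cap\RR^n_{g+}$ and $v\in\RR^n_{h-}\cap\RR^n_{g+}$, that is, if and only if $u$ and $v$ lie in the $(+)$- and $(-)$-eigenspaces of $h$ inside $\RR^n_{g+}$. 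Taking the union over $h$ of even order recovers exactly the description of $\pi_g^{-1}(\overline{(\RR^n)^g/G})$ furnished by applying Proposition~\ref{prop:quotient} to the $G$-space $(\RR^n)^g=\RR^n_{g+}$. With this identification the two expansions agree term by term, and summing over $g\in G$ yields
\[
{\rm sgn}\,\calB_{df}=\sum_{g\in G}{\rm ind}(d\check{f}^g;\overline{(\RR^n)^g/G},0)={\rm ind}^{\rm orb}(df;\pi^{-1}(\overline{\RR^n/G}),0),
\]
as claimed.
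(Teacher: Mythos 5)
Your argument is correct and is precisely the derivation the paper intends: the corollary is stated as an immediate consequence of Theorem~\ref{thm:main}, and the way to see this is exactly your scheme of paralleling the proof of Theorem~\ref{thm:complexQ} (the decomposition of $r^{(1)}\,{\rm ind}^G$ over group elements via $r^{(1)}[G/H]=|H|$ and the stratification by isotropy groups), with signatures in place of dimensions and Theorem~\ref{thm:main} applied to each $f^g$ on $(\RR^n)^g$ in place of Theorem~\ref{thm:ind=dim}. The one point the paper leaves entirely implicit --- the identification $\pi^{-1}(\overline{\RR^n/G})\cap(\CC^n)^g=\pi_g^{-1}\bigl(\overline{(\RR^n)^g/G}\bigr)$ needed to match the two expansions term by term --- you identify and verify correctly from Proposition~\ref{prop:quotient}.
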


From \cite[Proposition~4.11]{EG-EJM} one can derive that
\[
{\rm ind}^{\rm orb}(df; \pi^{-1}(\overline{\RR^n/G}),0) = - \overline{\chi}^{\rm orb}(M^-_{f_{\RR}},G) \, ,
\]
where $f_{\RR}$ is the restriction of $f$ to $\pi^{-1}(\overline{\RR^n/G})$ (a real valued function),
$M^-_{f_{\RR}}$ is the ``negative'' Milnor fibre $f_{\RR}^{-1}(-\eps)\cap B_{\delta}$ of the function
$f_{\RR}$ ($\eps>0$ is small enough).

\section{G-signature and the equivariant index} \label{sect:G-signature}
If $\omega$ is a real analytic $G$-invariant 1-form on $(\CC^n,0)$ with an isolated singular point at
the origin (in $\CC^n$), the pairing $B_{\omega}$ on
$\Omega_{\omega}=\Omega^n_{\RR^n,0}/\omega \wedge \Omega^{n-1}_{\RR^n,0}$ is also $G$-invariant. In this
situation one has a notion of its $G$-signature ${\rm sgn}^GB_{\omega}$ as an element of the ring $R_{\RR}(G)$
of (real) representations of the group $G$: see, e.~g., \cite{GZ1986}, \cite{Damon}.
One can show that if the order of the group $G$ is odd, the reduction
$r_{\RR}\ind^G(\omega;\RR^n),0$ of the equivariant index of the 1-form $\omega$ under the natural
map $r_{\RR}: A(G)\to R_{\RR}(G)$ is equal to the $G$-signature ${\rm sgn}^GB_{\omega}$ of the
quadratic form $B_{\omega}$. This is essentially proved in \cite{GZ1986}, \cite{Damon}.
(In these papers the statement is formulated in terms of a $G$-degree of a map.)
Together with Theorem~\ref{thm:main} this permits to conjecture that, for a finite abelian group $G$,
the $G$-signature ${\rm sgn}^GB_{\omega}$ might be equal to the reduction of the equivariant index
$\ind^G(\omega;\pi^{-1}(\overline{\RR^n/G}),0)$. This is not the case.

\begin{example}
 Let the group $G=\ZZ_2$ act on $\RR^2$ by $\sigma(x,y)=(-x,-y)$ and let $\omega=df$, where
 $f(x,y)=x^2-y^2$. One has ${\rm sgn}^GB_{\omega}=-1$. The preimage $\pi^{-1}(\overline{\RR^n/G})$
 is $\RR^2\cup i\RR^2$. The equivariant index
 $\ind^G(\omega;\pi^{-1}(\overline{\RR^n/G}),0)$ is equal to 
 $1+\ind^G(\omega;\RR^2\setminus\{0\},0)+\ind^G(\omega;i\RR^2\setminus\{0\},0)$, where
 $\ind^G(\omega;\RR^2\setminus\{0\},0)=c_1[\ZZ_2/(e)]$,
 $\ind^G(\omega;i\RR^2\setminus\{0\},0)=c_2[\ZZ_2/(e)]$,
 $1+2c_1=\ind(\omega;\RR^2,0)=-1$, $1+2c_2=\ind(\omega;i\RR^2,0)=-1$.
 Thus $\ind^G(\omega;\pi^{-1}(\overline{\RR^n/G}),0)=1-2[\ZZ_2/(e)]$
 and $r_{\RR}\ind^G(\omega;\pi^{-1}(\overline{\RR^n/G}),0)=1-2(1+\sigma)=-1-2\sigma$,
 where $\sigma$ is the non-trivial one-dimensional representation of the group $\ZZ_2$.
\end{example}

\bigskip
\noindent Leibniz Universit\"{a}t Hannover, Institut f\"{u}r Algebraische Geometrie,\\
Postfach 6009, D-30060 Hannover, Germany \\
E-mail: ebeling@math.uni-hannover.de\\

\medskip
\noindent Moscow State University, Faculty of Mechanics and Mathematics,\\
Moscow, GSP-1, 119991, Russia\\
E-mail: sabir@mccme.ru

\begin{thebibliography}{10}


\bibitem{AS} M.~Atiyah, G.~Segal:
On equivariant Euler characteristics. 
J. Geom. Phys. {\bf 6}, no.4, 671--677 (1989). 

\bibitem{BSS} J.-P.~Brasselet, J.~Seade, T.~Suwa: Vector fields on singular varieties.
Lecture Notes in Mathematics, 1987. Springer-Verlag, Berlin, 2009.

\bibitem{Damon} J.~Damon: Equivariant morsifications and bifurcations for finite groups.
In: Singularity Theory and Its Applications: Warwick 1989, vol.2, Springer Lecture Notes,
1463 (1991), pp.80--106.


\bibitem{EG-Fields} W.~Ebeling, S.~M.~Gusein-Zade:
On the index of a vector field at an isolated singularity. In:
Bierstone, E., Khesin, B., Khovanskii, A., Marsden, J. (eds)
The Arnoldfest: Proceedings of a Conference in Honour of V.~I.~Arnold
for his Sixtieth Birthday. Fields Institute Communications,
Vol. \textbf{24}, Am. Math. Soc., Providence (1999), pp.~141--152.

\bibitem{EG-MMJ} W.~Ebeling, S.M.~Gusein-Zade: Indices of 1-forms
on an isolated complete intersection singularity. Mosc.
Math. J. {\bf 3} (2003), no.2, 439--455.

\bibitem{EG-Sur} W.~Ebeling, S.M.~Gusein-Zade: Indices of vector fields and 1-forms on singular varieties.
In: Global aspects of complex geometry, 129--169, Springer, Berlin, 2006.

\bibitem{EGMZ} W.~Ebeling, S.M.~Gusein-Zade: Quadratic forms
for a 1-form on an isolated complete intersection singularity.
Math. Z. {\bf 252} (2006), 755--766.

\bibitem{EG-EJM} W.~Ebeling, S.M.~Gusein-Zade:
Equivariant indices of vector fields and 1-forms.
Eur. J. Math. {\bf 1}, 286--301 (2015).

\bibitem{EG-BBMS} W.~Ebeling, S.M.~Gusein-Zade:
An equivariant version of the Euler obstruction. Bull. Braz. Math. Soc., New Series,
{\bf 48}, no.2, 199--208 (2017).

\bibitem{EGS} W.~Ebeling, S.M.~Gusein-Zade, J.~Seade:
Homological index for 1-forms and a Milnor number for isolated
singularities. Int. J. of Math. \textbf{15}, 895--905 (2004).

\bibitem{EL} D.~Eisenbud, H.~Levine: An algebraic
formula for the degree of a $C^\infty$ map germ. Ann. Math. {\bf 106},
19--38 (1977).

\bibitem{FJR} H.~Fan, T.~Jarvis, Y.~Ruan: The Witten equation,
mirror symmetry, and quantum singularity theory. Ann. of Math. (2) {\bf 178}, no.1, 1--106 (2013).

\bibitem{GMM1} X.~G\'omez-Mont, P.~Marde\v{s}i\'c:  The index of a vector field tangent to
a hypersurface and the signature of the relative Jacobian determinant.
Ann. Inst. Fourier {\bf 47}, 1523--1539 (1997).

\bibitem{GMM2} X.~G\'omez-Mont, P.~Marde\v{s}i\'c:
The index of a vector field tangent to an odd-dimensional
hypersurface and the signature of the relative Hessian.
{\cyr Funktsional. Anal. i Prilozhen.} {\bf 33}, no.1, 1--13 (1999)
(English translation in Funct. Anal. and Applications,
{\bf 33}, no.1, 1--10 (1999)).

\bibitem{GZ1986} S.M.~Gusein-Zade: An equivariant analogue of the index of a gradient vector field.
Lecture Notes in Mathematics. vol.1214, Springer-Verlag, 1986, p.196--210.

\bibitem{GZM} S.M.~Gusein-Zade, F.I.~Mamedova: On equivariant indices of 1-forms on varieties. 
{\cyr Funktsional. Anal. i Prilozhen.} {\bf 51}, no.3, 22--32 (2017) (in Russian);
English translation in  Funct. Anal. Appl. to appear.

\bibitem{HH} F.~Hirzebruch, Th.~H\"ofer:
On the Euler number of an orbifold.
Math. Ann. {\bf 286}, no.1--3, 255--260 (1990).

\bibitem{Hu} J.~Huisman:
Real quotient singularities and nonsingular real algebraic curves in the boundary of the moduli space. 
Compositio Math. {\bf 118} (1999), no.~1, 43--60. 

\bibitem{Kh} G.~N.~Khimshiashvili: On the local degree
of a smooth map. Comm. Acad. Sci. Georgian SSR. {\bf 85}, no.2,
309--311 (1977) (in Russian).

\bibitem{Knutson} D.~Knutson: $\lambda$-rings and the representation theory of the symmetric group.
Lecture Notes in Mathematics, Vol.~{\bf 308}, Springer-Verlag, Berlin, New York, 1973. 


\end{thebibliography}
\end{document}